\newtheorem{lem}{Lemma}[section]
\newtheorem{prop}[lem]{Proposition}
\newtheorem{cor}[lem]{Corollary}
\newtheorem{conj}[lem]{Conjecture}
\newtheorem{claim*}{Claim}
\newtheorem{thm*}{Theorem}
\newtheorem{thm}[lem]{Theorem}
\theoremstyle{definition}
\newtheorem{rmk}[lem]{Remark}
\newtheorem{defn}[lem]{Definition}
\newtheorem{question}[lem]{Question}
\numberwithin{equation}{section}
\numberwithin{table}{section}
\def\phi{\varphi}
\def\p{\mathfrak{p}}
\def\vp{\varphi}
\def\a{\alpha}
\def\P1{\mathbb{P}^1_F}
\def\P{\mathcal{P}}
\newcommand{\FF}{\mathbb{F}}
\newcommand{\PP}{\mathbb{P}}
\newcommand{\oo}{\mathfrak{o}}
\newcommand{\pp}{\mathfrak{p}}
\newcommand{\N}{\mathrm{N}}
\newcommand{\PGL}{\mathrm{PGL}}
\title{Newton's method over global height fields}
\author{Xander Faber}
\address{
Department of Mathematics \\
University of Hawaii \\
Honolulu, HI}
\email{xander@math.hawaii.edu}
\author{Adam Towsley}
\address{
Department of Mathematics\\
CUNY Graduate Center\\
New York, NY
}
\email{atowsley@gc.cuny.edu}
\begin{document}

\begin{abstract}
Newton's method is used to approximate roots of complex valued functions $f$ by creating a sequence of points that converges to some root of $f$ in the usual topology.
For any field $K$ equipped with a set of pairwise inequivalent absolute values satisfying a product formula, we completely describe the conditions under which Newton's method applied to a squarefree polynomial $f \in K \left[ x \right]$ will succeed in finding some root of $f$ in the $v$-adic topology for infinitely many places $v$ of $K$. Furthermore, we show that if $K$ is a finite extension of the rationals or of the rational function field over a finite field, then the Newton approximation sequence fails to converge $v$-adically for a positive density of places $v$.
\end{abstract}

\maketitle
\section{Introduction}

	Newton's method is one of the most efficient techniques for locating and approximating zeros of complex analytic or $p$-adic analytic functions in a single variable. Recall how it works: if $f(x)$ is an analytic function with a simple root $\alpha$, then for any $x_0$ sufficiently close to $\alpha$ we have 
	\[
		0 = f(\alpha) \approx f(x_0) + f'(x_0) (\alpha - x_0) \ \Longrightarrow \ \alpha \approx x_0 - \frac{f(x_0)}{f'(x_0)}. 
	\]
Setting $N_f(x) = x - \frac{f(x)}{f'(x)}$, 
one can prove that the orbit $(x_n)$ defined by $x_n = N_f(x_{n-1})$ for $n \geq 1$ converges quadratically to the root $\alpha$. 


	Now let $K$ be a global height field --- that is, a field equipped with a set $M_K$ of pairwise inequivalent absolute values $\{|\cdot|_v: v \in M_K\}$ satisfying an adelic property and a product formula.  The basic examples are number fields and function fields of transcendence degree~1. (See Section~\ref{Sec: Height fields} for more details.) Each $v$ induces a different metric structure on $K$. Fix a univariate polynomial function $f$ and an initial guess $x_0$, both defined over $K$. Define a sequence recursively by $x_n = N_f(x_{n-1})$ for $n \geq 1$, where as above we set 
		\[
			N_f(x) = x - f(x) / f'(x).
		\] 
We will say that $(x_n)$ is the \textbf{Newton approximation sequence} associated to $f$ and $x_0$ if it is not eventually periodic. Following an investigation of the adelic convergence of orbits of more general dynamical systems \cite{Silverman_Voloch}, Silverman and Voloch  asked the following question. (We paraphrase. See also Remark~\ref{Rmk: Silverman/Voloch} below.)
	
	\begin{quote}What can be said about the set of places $v$ of $K$ for which the Newton approximation sequence $(x_n)$ converges to some root of~$f$? \end{quote}
	
	Silverman and Voloch showed that the \textit{complement} of the set of places of convergence is infinite. Later, the first author and Voloch showed that the set of places of convergence itself is infinite, unless there is a specific kind of dynamical obstruction. Theorem~\ref{Thm: FV Main} summarizes these conclusions in the case of squarefree polynomials. Recall that if $\phi \in K(z)$ is a rational function of degree at least~2, a point $\alpha \in \PP^1(K)$ is called an exceptional fixed point for $\phi$ if $\phi(\alpha) = \alpha$ and $\phi$ is totally ramified at $\alpha$. 
	
\begin{thm}[\cite{FV, Silverman_Voloch}]
\label{Thm: FV Main}
	Let $K$ be a number field, and let $(x_n)$ be the Newton approximation sequence associated to a squarefree polynomial $f \in K[x]$ of degree at least~2 and an initial point $x_0 \in K$. 
	\begin{enumerate}
		\item  The sequence $(x_n)$ converges $v$-adically to the root $\alpha$ of $f$ for infinitely
		many places $v$ if and only if $\alpha$ is not an exceptional fixed point for the 
		dynamical system $N_f$. In particular, if $f$ is not quadratic, then 
		$(x_n)$ converges $v$-adically to some root of $f$ for infinitely many places $v$. 
		\item The sequence $(x_n)$ fails to converge in $\PP^1(K_v)$ for infinitely many 
			places $v$. 
	\end{enumerate}
\end{thm}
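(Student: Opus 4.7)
The proof hinges on a careful local analysis of $N_f$ at a simple root $\alpha$, combined with a product-formula obstruction. Writing $f(x) = (x-\alpha)g(x)$ with $g(\alpha) \ne 0$ yields the identity
\[
N_f(x) - \alpha \;=\; \frac{(x-\alpha)^2\, g'(x)}{f'(x)},
\]
so $\alpha$ is a superattracting fixed point of $N_f$ and admits a $v$-adic basin of attraction $\mathcal{B}_v(\alpha)$ at each place. My first task would be to normalize these basins: for all but finitely many $v$ (those where $N_f$ has good reduction and the relevant local leading coefficients are units) $\mathcal{B}_v(\alpha)$ contains an explicit $v$-adic disk around $\alpha$ whose radius is governed by the local leading behavior of $N_f$.

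For the forward direction of Part~1, suppose $\alpha$ is an exceptional (totally ramified) fixed point of $N_f$. The identity above shows this forces $g'(x) = c(x-\alpha)^{d-2}$ for some $c \in K^*$, where $d = \deg f$, and hence $f$ has the special form $f(x) = a(x-\alpha)^d + b(x-\alpha)$ with $a,b \in K^*$. A direct computation then yields $N_f(\alpha + u) = \alpha + c' u^d(1 + O(u^{d-1}))$ for an explicit $c' \in K^*$. The basin $\mathcal{B}_v(\alpha)$ is then precisely the disk $|u|_v < |c'|_v^{-1/(d-1)}$, and convergence to $\alpha$ at $v$ reduces to the single inequality $|(x_0-\alpha)^{d-1} c'|_v < 1$. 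Since the quantity inside the absolute value is a fixed nonzero element of $K$, the product formula forces this inequality at only finitely many places, giving the desired obstruction.

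For the reverse direction, assume $\alpha$ is non-exceptional. Then $N_f^{-1}(\alpha)$ contains some $y \ne \alpha$, and iterated preimages $N_f^{-n}(\alpha)$ become Zariski-dense on $\PP^1$ as $n \to \infty$. My plan here is to exploit this genuine branching: for each large $n$, the $d^n$ preimages $y \in N_f^{-n}(\alpha)$ have logarithmic heights growing like $d^n h(\alpha) + O(d^n)$, while the standard approximation inequality $\sum_v \log^+ |x_0 - y|_v^{-1} \le h(x_0) + h(y) + O(1)$, summed over $y \in N_f^{-n}(\alpha)$, shows that for most such $y$ one must have $|x_0 - y|_v$ small at many places $v$. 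A pigeonhole argument over $v$ should then produce infinitely many places at which $x_0$ lies inside the $v$-adic basin at $v$ of some preimage of $\alpha$; once there, $N_f^n(x_0) \in \mathcal{B}_v(\alpha)$ and the first step completes the convergence. The hardest step is making the pigeonhole quantitative enough that the growth of $n$ (and of the preimage heights) does not outpace the growth in the number of good places of approximation; this is where the essential arithmetic input lies.

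For Part~2, my plan is to invoke the adelic non-convergence theorem of Silverman and Voloch \cite{Silverman_Voloch}: for any wandering, non-preperiodic point under a rational map of degree at least $2$, the forward orbit fails to be $v$-adically Cauchy at infinitely many places. Since $(x_n)$ is by hypothesis non-eventually-periodic under $N_f$ and $\deg N_f = \deg f \geq 2$, this theorem applies directly to $\phi = N_f$ and produces infinitely many places at which $(x_n)$ fails to converge in $\PP^1(K_v)$.
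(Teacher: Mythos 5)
You are proving a result that the paper cites from \cite{FV, Silverman_Voloch} rather than proving; the appropriate comparison is with the paper's proof of Theorem~\ref{Thm: Arithmetic/Dynamical Exceptional}, which generalizes Part~(i) to arbitrary global height fields.

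Your exceptional-root direction is on the right track --- in characteristic zero an exceptional root does force $f(x) = a(x-\alpha)^d + b(x-\alpha)$, and the product formula is the correct obstruction --- but your claim that the $v$-adic basin is \emph{precisely} the disk $|u|_v < |c'|_v^{-1/(d-1)}$ at every place $v$ is asserted without proof and fails at places of bad reduction. More importantly, you must also rule out the possibility that $x_0$ lies outside the basin yet some later iterate $x_n$ falls into it. The paper handles both issues by excluding a finite bad set $S'_\alpha$ and then running a \emph{backward} induction: for $v \notin S'_\alpha$, the identity $x_{n+1}-\alpha = c\,(x_n-\alpha)^{\deg N_f}/f'(x_n)$ combined with the exclusion conditions forces $|x_{n+1}-\alpha|_v < 1 \Rightarrow |x_n-\alpha|_v < 1$, hence $|x_0-\alpha|_v < 1$, which holds at only finitely many $v$ by the product formula. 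The backward step is what your reduction to ``the single inequality $|(x_0-\alpha)^{d-1}c'|_v<1$'' silently assumes.

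The genuine gap is in your non-exceptional direction. The inequality you invoke, $\sum_v \log^+|x_0-y|_v^{-1} \le h(x_0)+h(y)+O(1)$, bounds the aggregate closeness of $x_0$ to a preimage $y$ from \emph{above}; it cannot force $x_0$ to be near any particular $y$ at any place, so it points the wrong way for the pigeonhole you want, and you yourself concede that the quantitative step is ``the hardest'' without carrying it out. That step is in fact the entire arithmetic content: \cite{FV} supplies it via a dynamical Zsigmondy (primitive prime divisor) theorem applied to the forward orbit, while the present paper replaces that with Theorem~\ref{Thm: Towsley1}, a consequence of Runge's method, which directly produces infinitely many places $v$ with $|N_f^n(x_0)-\alpha|_v < 1$ for some $n$. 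Your Part~(ii) attribution to Silverman--Voloch is correct; note the paper's Theorem~\ref{Thm: Positive divergence} strengthens it to positive lower density via Theorem~\ref{Thm: Towsley2}.
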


	The first statement says that the arithmetic conclusion about convergence to a root at infinitely many places is equivalent to a geometric conclusion about the dynamics of a certain fixed point for the Newton map. The statement itself requires only the language of global height fields, while its proof uses a Diophantine approximation technique specific to number fields. Our first goal in the present paper is to show that Theorem~\ref{Thm: FV Main}(i) is really a statement about global height fields by using a result of the second author \cite{Towsley}, which in turn is a consequence of Runge's method. 
	
\begin{thm}
\label{Thm: Arithmetic/Dynamical Exceptional}
	Let $K$ be a global height field, and let $(x_n)$ be the Newton approximation sequence associated to a point $x_0 \in K$ and a squarefree polynomial $f \in K[x]$ with $\deg(f) \geq 2$. Then the sequence $(x_n)$ converges $v$-adically to a root $\alpha$ of $f$ for infinitely many places $v$ of $K$ if and only if $\alpha$ is not an exceptional fixed point for the dynamical system~$N_f$. 
\end{thm}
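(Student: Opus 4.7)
The plan is to prove the biconditional in the two directions separately. The forward implication (exceptional $\Rightarrow$ convergence at only finitely many places) is a direct adaptation to global height fields of the argument in \cite{FV, Silverman_Voloch} and uses only the adelic axiom on $M_K$. The reverse implication (non-exceptional $\Rightarrow$ convergence at infinitely many places) is where the Runge-type theorem of \cite{Towsley} enters decisively.

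For the forward direction, suppose $\alpha$ is an exceptional fixed point of $N_f$. Total ramification at $\alpha$ gives $N_f^{-1}(\alpha) = \{\alpha\}$ set-theoretically, so in local coordinates near $\alpha$ one has $N_f(x) = \alpha + u(x)(x - \alpha)^d$ with $d = \deg N_f$ and $u$ a unit at $\alpha$. Because $\alpha$ is its own only preimage under $N_f$, the full $v$-adic basin of attraction of $\alpha$ coincides with its immediate basin, which for all but finitely many $v$ (the places of good reduction for $N_f$ at which $\alpha$ is $v$-integral) is exactly the residue disk $\{z : |z - \alpha|_v < 1\}$. Convergence of $(x_n)$ to $\alpha$ at such a $v$ therefore forces $|x_0 - \alpha|_v < 1$. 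Since the Newton sequence is not eventually periodic we have $x_0 \neq \alpha$, and the adelic property of $M_K$ (applied in $K(\alpha)$ and then restricted to $M_K$) gives $|x_0 - \alpha|_v = 1$ for all but finitely many $v$. Combining this with the finite bad-reduction set yields the first direction.

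For the reverse direction, suppose $\alpha$ is non-exceptional. I would invoke the Runge-type theorem of \cite{Towsley}, whose content is, in essence, that for a global height field $K$, a rational map $\phi \in K(x)$ of degree at least $2$, and a $K$-rational non-exceptional fixed point $\alpha$, any non-preperiodic $K$-rational orbit of $\phi$ converges $v$-adically to $\alpha$ for infinitely many $v \in M_K$. Taking $\phi = N_f$ and using that $(x_n)$ is not eventually periodic (so in particular is not preperiodic to the fixed point $\alpha$) gives the conclusion when $\alpha \in K$. If $\alpha$ lies outside $K$, I would pass to a finite separable extension $L \supseteq K(\alpha)$, view $(x_n)$ as an $L$-rational orbit, apply the theorem over $L$, and descend via the restriction $M_L \to M_K$; the finiteness of the set of extensions of each $v \in M_K$ to $M_L$ ensures that infinitely many places of $L$-convergence yield infinitely many places of $K$-convergence.

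The main obstacle is matching our setup to the precise hypotheses of \cite{Towsley}: one must check that the Runge condition on the dynamical curves cut out by preimages of $\alpha$ under iterates of $N_f$ is satisfied exactly when $\alpha$ is non-exceptional for $N_f$, and that the conclusion of that theorem descends to the convergence statement we need. A secondary technical point is the uniform control of the bad-reduction places in the first direction, which is built into the global height field framework but must be extracted carefully from the axioms rather than borrowed from number-field tools.
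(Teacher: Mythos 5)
Your overall plan matches the paper's: split the biconditional into the exceptional and non-exceptional directions and use Towsley's Runge-type theorem for the latter. But there is a genuine gap in the reverse direction. The result of \cite{Towsley} that the paper actually invokes (the statement quoted as Theorem~\ref{Thm: Towsley1} in the paper) concludes only that $\left|\varphi^n(\beta)-\gamma\right|_v < 1$ for some $n$ at infinitely many places $v$ --- that is, the orbit \emph{enters the open residue disk around $\gamma$}. It does \emph{not} assert $v$-adic convergence. Your paraphrase (``any non-preperiodic $K$-rational orbit of $\phi$ converges $v$-adically to $\alpha$ for infinitely many $v$'') is strictly stronger than what the theorem says, and using it at face value leaves the reverse implication unproved. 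To bridge the gap you need the intermediate lemma the paper proves first: away from a finite bad set $S_\alpha$ of places, once $|x_n-\alpha|_v < 1$ the map satisfies $|x_{n+1}-\alpha|_v \leq |x_n-\alpha|_v^2$ (using the factorization $f=(x-\alpha)g$ and the fact that every root of $f$ is a critical fixed point of $N_f$), so entry into the disk forces convergence. You do articulate the slogan ``residue disk $=$ basin in good reduction'' in your forward direction, but you state it only for exceptional $\alpha$, and you never connect it to Towsley's conclusion in the reverse direction. Stated uniformly at the outset --- as the paper does --- this lemma makes both directions work.

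A secondary point: you need not ``check that the Runge condition on the dynamical curves cut out by preimages of $\alpha$'' is equivalent to non-exceptionality. The corollary you want from \cite{Towsley} already comes with the clean hypothesis ``$\gamma$ periodic and not exceptional, $\beta$ not preperiodic,'' which exactly matches $\gamma = \alpha$, $\beta = x_0$ in this setting; the translation into Runge hypotheses was done internally in that reference and should not be reopened here. With these two fixes --- the quadratic-convergence lemma bridging disk entry to convergence, and citing Towsley's corollary in its packaged form --- your outline becomes the paper's proof.
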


\begin{rmk}
	The condition that $f$ be squarefree implies that $f'$ is not identically zero, and hence that $N_f$ is well defined. Moreover, if $K$ has characteristic $p > 0$ and $g$ is a function of $x^p$, say $g = h \left( x^p \right)$, then $N_{f\cdot g} = N_f$, which means the Newton map ignores inseparable factors. For purposes of root finding, it is also reasonable to assume that $f$ is squarefree by replacing $f$ with $f / \gcd(f, f')$ if necessary. 
\end{rmk}

	The conclusion in Theorem~\ref{Thm: FV Main}(i) that $(x_n)$ converges for infinitely many places $v$ if $\deg(f) > 2$ is a consequence of a classification of polynomials $f$ that have a dynamically exceptional root $\alpha$ --- i.e., a root that is an exceptional fixed point for $N_f$. For global height fields of characteristic zero, this classification carries over verbatim;  with some modification of the argument in positive characteristic, one can still give a classification of polynomials $f$ for which \textit{all} of its roots are exceptional fixed points for $N_f$ (Proposition~\ref{prop:Dynamically_Exceptional}). This immediately yields the following corollary. 
		
\begin{cor}
	Let $K^{\mathrm{a}}$ be a fixed algebraic closure of $K$. With the notation of Theorem \ref{Thm: Arithmetic/Dynamical Exceptional}, the sequence $(x_n)$ converges $v$-adically to some root of $f$ for infinitely many places $v$ of $K$ unless one of the following is true:
	\begin{itemize}
		\item $f$ is quadratic;
		\item $K$ has characteristic $p > 0$ 
			and there exist $A, B, C \in K^{\mathrm{a}}$ and an integer $r \geq 1$ 
			such that  $Af(Bx + C) = x^{p^r} - x$; or
		\item $K$ has characteristic $p > 0$ 
			and there exist $\lambda, A, B, C \in K^{\mathrm{a}}$ and an integer $r \geq 1$ 
			such that  $Af(Bx+C) = x^{p^r+1} - (\lambda+1)x^{p^r} + \lambda x$. 			 		
	\end{itemize}
\end{cor}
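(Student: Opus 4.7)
My plan is to derive the corollary from the contrapositive of Theorem \ref{Thm: Arithmetic/Dynamical Exceptional} combined with the forthcoming classification in Proposition \ref{prop:Dynamically_Exceptional}, which identifies exactly those squarefree polynomials $f$ of degree at least $2$ whose roots are all exceptional fixed points for $N_f$.

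First I would argue the contrapositive of the corollary. Since $f$ has only finitely many roots in a fixed algebraic closure, if $(x_n)$ converges $v$-adically to \emph{some} root of $f$ at only finitely many places $v$, then for each individual root $\alpha$ the set of places at which $(x_n) \to \alpha$ is also finite. After passing to a splitting field of $f$ equipped with its induced global-height-field structure, Theorem \ref{Thm: Arithmetic/Dynamical Exceptional} applies to each root and forces \emph{every} root of $f$ to be an exceptional fixed point for $N_f$. Proposition \ref{prop:Dynamically_Exceptional} then says that $f$ must be affine-equivalent over $K^{\mathrm{a}}$ to one of the three normal forms listed in the statement, completing the proof.

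The main obstacle is therefore the proposition on which the corollary rests, not the corollary itself. In characteristic zero, a Riemann-Hurwitz count bounds the number of totally ramified points of any separable rational map of degree $d \geq 2$ by two, forcing $\deg(f) \leq 2$. In positive characteristic $N_f$ may be inseparable, and one must pin down exactly when every root of $f$ can be totally ramified for $N_f$. A direct computation shows this happens precisely when $N_f$ is a pure Frobenius power $x^{p^r}$ (arising from $f = x^{p^r}-x$, since then $f' = -1$ and $N_f = x^{p^r}$) or a Möbius post-composition of Frobenius (arising from $f = x^{p^r+1}-(\lambda+1)x^{p^r}+\lambda x$, for which $f' = x^{p^r}+\lambda$ and $N_f = (\lambda+1)x^{p^r}/(x^{p^r}+\lambda)$). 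Extracting these two families from the formula $N_f = x - f/f'$ in general is the true technical content underlying the corollary.
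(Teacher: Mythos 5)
Your proof is correct and follows the same path as the paper: pass to a splitting field, note that convergence to some root at only finitely many places forces every root to be arithmetically (hence, by Theorem~\ref{Thm: Arithmetic/Dynamical Exceptional}, dynamically) exceptional, and then invoke Proposition~\ref{prop:Dynamically_Exceptional}. The paper simply states that the corollary is immediate from the proposition, so you have filled in precisely the intended reduction.
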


	We now discuss quantitative extensions of Theorem~\ref{Thm: FV Main}(ii). The first author and Voloch conjectured that something much stronger should be true in the case of number fields: the set of places $v$ of $K$ for which $(x_n)$ does not converge $v$-adically to any root of $f$ has density one when ordered by norm. They were only able to provide numerical and heuristic evidence to this effect. A clever technique using the Chebotarev density theorem was later developed by several authors to address a related question on collisions of orbits modulo primes \cite{BGHKST}. It implies that Newton's method fails to converge to some root for a \textit{positive} density of places of a number field \cite[Thm.~4.6]{BGHKST}. (It does not give an effective lower bound on the density.)  We are able to give a unified proof of positive density for any global field. 
	
\begin{thm}
\label{Thm: Positive divergence}
	Let $K$ be a finite extension of the rational field $\mathbb{Q}$ or of the rational function field $\mathbb{F}_p(T)$. Assume the setup of Theorem~\ref{Thm: Arithmetic/Dynamical Exceptional}. The set of places $v$ of $K$ for which the Newton approximation sequence $(x_n)$ diverges $v$-adically has positive lower density when ordered by norm. 
\end{thm}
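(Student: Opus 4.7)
The plan is to follow the Chebotarev-based orbit-collision strategy of \cite[Thm.~4.6]{BGHKST}, which establishes this conclusion over number fields, and to adapt it to the global height field setting (in particular, to function fields over finite fields). The guiding observation is that at a place $v$ of good reduction, $v$-adic convergence of $(x_n)$ in $\mathbb{P}^1(K_v)$ forces the reduction $(\bar x_n \bmod \mathfrak{p})$ to be eventually constant at a fixed point of the reduced Newton map. The goal is therefore to produce a positive lower density of places $\mathfrak{p}$ at which $(\bar x_n \bmod \mathfrak{p})$ avoids every root of $\bar f$.

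First I would carry out the local analysis. The Newton map $N_f$ fixes $\infty$ with multiplier $d/(d-1)$ (where $d = \deg f$), which has $v$-adic absolute value $1$ at every place whose residual characteristic is coprime to $d(d-1)$. Away from this density-zero exceptional set and the finitely many primes of bad reduction for $f$, $f'$, and $x_0$, one checks that $v$-adic convergence is equivalent to the arithmetic condition $\mathfrak{p} \mid f(x_n)$ for some $n \geq 0$: the forward direction because a convergent orbit is eventually congruent to its limit modulo $\mathfrak{p}$, and the reverse direction by the quadratic doubling $v(x_n - \alpha) \geq 2\, v(x_{n-1} - \alpha)$ of Newton's method once the orbit enters the basin around a root $\alpha$. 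Thus divergence at $v$ follows whenever $f(x_n) \not\equiv 0 \pmod{\mathfrak{p}}$ for all $n \geq 0$.

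Next I would invoke the Chebotarev density argument from \cite{BGHKST}. The construction uses a tower of Galois extensions $K = L_0 \subset L_1 \subset L_2 \subset \cdots$, where $L_n$ is generated over $K$ by the roots of $f$ together with the relevant preimages of those roots under iterates of $N_f$. The collision condition ``$f(x_m) \equiv 0 \pmod{\mathfrak{p}}$ for some $m \leq n$'' translates into membership of $\text{Frob}_\mathfrak{p}$ in a prescribed subset of $\Gal(L_n/K)$. Jordan-type derangement estimates on the natural action of $\Gal(L_n/K)$ on the preimage tree, together with a height-based bound controlling the primes at which $x_0$ accidentally reduces modulo $\mathfrak{p}$ to a preperiodic algebraic point of $N_f$, produce a positive lower density of primes of $K$ for which no collision occurs at any level. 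The needed Chebotarev density inputs are the Lagarias--Odlyzko theorem in the number-field case and its well-known function-field analogue when $K$ is a finite extension of $\mathbb{F}_p(T)$. Combined with the first step, this yields a positive lower density of places at which $(x_n)$ diverges $v$-adically.

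The hard part will be the control of the Galois-theoretic derangement proportions up the tower $(L_n)$, which is the technical heart of \cite{BGHKST} in the number-field setting. Extending it to the function-field case requires combining the function-field Chebotarev density theorem with analogous height estimates; both ingredients are standard separately, but their combination must be executed carefully to preserve the positive lower density conclusion, and in particular to ensure that the derangement proportions remain bounded below as one passes to deeper levels of the tower.
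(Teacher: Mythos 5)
Your proposal follows the right general strategy when $N_f$ has separable degree at least~2, but you are proposing to re-derive the Chebotarev/derangement machinery from scratch, whereas the paper simply cites it: \cite[Thm.~1.1]{BGHKST} for number fields and \cite[Thm.~4]{Towsley} for function fields are packaged together as Theorem~\ref{Thm: Towsley2} and applied as a black box with $\gamma = x_0$ and $\mathcal{B}$ the set of \emph{all} fixed points of $N_f$. Note that $\mathcal{B}$ must include $\infty$ when $\deg(f) \not\equiv 1 \pmod p$: a $v$-adically convergent orbit of $N_f$ limits to \emph{some} fixed point of $N_f$ by continuity, not necessarily a finite root of $f$, and your multiplier computation at $\infty$ does not on its own exclude convergence to $\infty$ at places of good reduction.

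The genuine gap is that your argument is silent about the case where $N_f$ is \emph{purely inseparable}, which can actually occur for squarefree $f$ over a function field of positive characteristic (Proposition~\ref{Inseparable_Criterion}; e.g.\ $f(x) = x^{p^r} - x$). The hypothesis ``separable degree at least~2'' in Theorem~\ref{Thm: Towsley2} exists precisely to exclude this case, and your Chebotarev strategy fails there for a structural reason: a purely inseparable map has a unique $K^{\mathrm{a}}$-rational preimage at every point, so the preimage ``tree'' collapses to a single path, the Galois action on each level is trivial, and there are no derangements to count --- the method yields nothing. The paper handles this case by a short direct argument: conjugate so that $N_f(x) = x^q$, write $x_n - \alpha = (x_0 - \alpha)^{q^n}$ for each root $\alpha \in \FF_q$, so that $|x_n - \alpha|_v = |x_0 - \alpha|_v^{q^n}$ tends to $0$ only when $|x_0 - \alpha|_v < 1$, which holds at only finitely many places (and similarly for $\alpha = \infty$ with $y_0 = 1/x_0$). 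Without this second branch, the proof does not cover all squarefree $f$ over a function field.
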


\begin{rmk}
\label{Rmk: Silverman/Voloch}
	Silverman and Voloch proved a preliminary form of the above theorem for a number field $K$ or an arbitrary function field $K/k$ of transcendence degree~1. They showed that the set of places $v$ of $K$ for which the Newton approximation sequence diverges is infinite \cite[Remark~10]{Silverman_Voloch}. Surprisingly, they were able to show that the result continues to hold in the function field setting \textit{even when the map is purely inseparable}, and we model the purely inseparable case of our proof on theirs. 
\end{rmk}

	We now update the main conjecture of \cite{FV} to include the case of function fields:
	
\begin{conj}[Newton Approximation Fails for $100\%$ of Places] 
	Let $K$ be a finite extension of the rational field $\mathbb{Q}$ or of the rational function field $\mathbb{F}_p(T)$. Let $(x_n)$ be the Newton approximation sequence associated to a squarefree polynomial $f \in K[x]$ of degree at least~2 and a point $x_0 \in K$. Define $C(K,f, x_0)$ to be the set of places $v$ of $K$ for which $(x_n)$ fails to converge $v$-adically to some root of $f$. Then the set of places $C(K,f,x_0)$ has natural density one when ordered by norm. 
\end{conj}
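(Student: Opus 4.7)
At all but finitely many places $v$ of $K$, the map $N_f$ has good reduction and each root $\alpha_i$ of $f$ reduces to a distinct superattracting fixed point of $\bar N_f$ on $\PP^1(k_v)$. Since the local expansion $N_f(x) - \alpha_i = c\,(x-\alpha_i)^2 + \cdots$ together with Hensel's lemma forces quadratic $v$-adic convergence to $\alpha_i$ as soon as $\bar x_n = \bar\alpha_i$, the $v$-adic Newton sequence $(x_n)$ converges to $\alpha_i$ if and only if the residue-field orbit $(\bar N_f^n(\bar x_0))_{n\ge 0}$ passes through $\bar\alpha_i$. The conjecture therefore reduces to showing that the set
\[
S \;=\; \bigl\{v : \bar N_f^n(\bar x_0) = \bar\alpha_i \text{ for some } n \geq 0 \text{ and some root } \alpha_i \text{ of } f \bigr\}
\]
has natural density zero; equivalently, at density one of places $v$ the forward orbit of $\bar x_0$ enters a cycle of $\bar N_f$ other than one of the fixed-point cycles $\{\bar\alpha_i\}$.

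\textbf{Chebotarev on the preimage tree.} For each $(n,\alpha_i)$, the event $\bar N_f^n(\bar x_0) = \bar\alpha_i$ at $v$ is equivalent to the existence of $\beta \in N_f^{-n}(\alpha_i) \subset \bar K$ with $\beta \equiv x_0 \pmod{v}$, and so can be controlled by Chebotarev's theorem applied to the splitting field $L_n$ of $N_f^n(z) - \alpha_i$ over $K$. Writing $G_n = \Gal(L_n/K)$, acting on the preimage set $N_f^{-n}(\alpha_i)$, the density of places producing a collision at depth $n$ is bounded by the fraction of elements of $G_n$ that fix some $\beta \in N_f^{-n}(\alpha_i)$ lying in the residue class of $x_0$. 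This is precisely the Chebotarev machinery used by \cite{BGHKST} to produce positive density of divergence underlying Theorem~\ref{Thm: Positive divergence}; the density-one conjecture demands the stronger statement that these bounds are summable over $n$, or at worst tend to zero in an averaged sense, so that an arbitrarily small fraction of Frobenius classes can force an orbit-root collision.

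\textbf{Main obstacle: arboreal maximality for Newton maps.} The heart of the problem is therefore to establish sufficient growth of $|G_n|$ relative to $\#N_f^{-n}(\alpha_i)$, i.e.\ (near-)maximality of the arboreal Galois representation attached to the backward orbit of $\alpha_i$ under $N_f$. A compelling supporting heuristic comes from the random mapping model: for a uniformly random self-map of a set of size $q_v$, the basin of any specified fixed point has size $O(\sqrt{q_v\log q_v})$, so the probability that a fixed $\bar x_0$ lies in the basin of $\bar\alpha_i$ tends to zero with $q_v$, consistent with $\delta(S) = 0$. Turning this heuristic into a theorem is, in our view, the essential unresolved difficulty: arboreal maximality theorems are currently available only in restricted families (work of Jones, Juul, Benedetto--Juul, and others), and Newton maps carry additional rigidity that must be accommodated --- each root is automatically a critical value with forced $2$-fold ramification, and the remaining free critical points are governed by $f''$, so the preimage tree is never a generic $d$-ary tree. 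In the function-field case one would, in addition, quarantine purely inseparable behavior exactly as in Silverman--Voloch's inseparable argument underlying Theorem~\ref{Thm: Positive divergence}; but the arboreal Galois input is the substantive barrier, and a successful attack on the conjecture would most likely come from exploiting the specific geometry of Newton's construction to prove arboreal maximality and companion equidistribution statements for the Newton map that are beyond the reach of present-day methods for arbitrary rational functions.
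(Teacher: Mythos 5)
The statement you were asked to prove is labeled a \emph{Conjecture} in the paper, and the paper offers no proof of it: the authors only establish the strictly weaker Theorem~\ref{Thm: Positive divergence} (positive lower density of divergence) and explicitly state that the density-one claim remains open, supported by numerical and heuristic evidence from \cite{FV}. Your submission correctly recognizes this --- it is a strategy sketch and obstacle analysis, not a proof, and you say so.

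As a sketch it is sound and aligns with the methods the paper does use. Your local reduction --- that at all but finitely many good-reduction places, $v$-adic convergence to $\alpha_i$ is equivalent to the residue orbit of $x_0$ hitting $\bar\alpha_i$ --- matches the key lemma in the proof of Theorem~\ref{THM: Success} (the quadratic contraction estimate $|x_{n+1}-\alpha|_v \le |x_n-\alpha|_v^2$ away from the bad set $S_\alpha$). Your Chebotarev framing is precisely the mechanism of \cite{BGHKST} and \cite{Towsley} invoked via Theorem~\ref{Thm: Towsley2}, which is what the paper uses to get positive lower density. And you are right to quarantine the purely inseparable case separately, as the paper does. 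The genuine barrier you identify --- that upgrading positive density to density one requires quantitative control on $|G_n|$, i.e.\ (near-)maximality of the arboreal Galois representation attached to the Newton map, which is unknown for this family and complicated by the forced critical structure at the roots --- is an accurate description of why the conjecture is open. There is no gap in your reasoning to flag; the ``gap'' is the open problem itself, and you have located it correctly.
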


	For other global height fields $K$, we expect the set of places of divergence to be ``large,'' but there is some issue in clarifying what ``large'' should mean. 
We are able to give a satisfying answer in the case where $K/k$ is the function field of a curve $Y$ defined over a global height field $k$. Here ``large'' may be quantified in terms of a height function on $Y$. 
	
\begin{thm}
\label{Thm: Large fields}
	Let $k$ be a global height field, and let $K/k$ be a function field of transcendence degree one. Fix a nonsingular connected curve $Y/k$ such that $K = k(Y)$, and fix a height function $h_Y$ on $Y(k^{\mathrm{a}})$ associated to an ample divisor. Let $(x_n)$ be the Newton approximation sequence associated to a squarefree polynomial $f \in K[x]$ of degree at least~2 and a point $x_0 \in K$. Then there exists a positive real number $M = M(f, x_0, h_Y)$ such that $(x_n)$ fails to converge in $K_y$ for all $y \in Y(k^{\mathrm{a}})$ satisfying $h_Y(y) \geq M$.
\end{thm}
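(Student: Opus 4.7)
The plan is to exploit a tension between two growth rates: the heights of the iterates $x_n$ grow like $d^n$ (from canonical-height theory for $N_f$), whereas quadratic convergence of Newton's method at a simple root forces the local $y$-adic valuation $v_y(x_n - \alpha)$ to grow only like $2^n$. A height--divisor inequality on $Y$ then translates this imbalance into a bound on $h_Y(y)$; when $d = 2$ the argument closes directly, while for $d > 2$ a refinement is needed. I expect the $d > 2$ case to be the main obstacle.

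More concretely, $K = k(Y)$ is itself a global height field, so the Call--Silverman canonical height $\hat h$ on $\PP^1(K^{\mathrm{a}})$ associated to $N_f$ satisfies $\hat h(x_n) = d^n \hat h(x_0)$. Since $(x_n)$ is not eventually periodic, $x_0$ is not preperiodic for $N_f$; hence $\hat h(x_0) > 0$ and $h_K(x_n) \geq c \cdot d^n$ for all large $n$. After passing to a finite separable cover $\pi \colon Y' \to Y$ over which $f$ splits as $\prod_i (x - \alpha_i)$, I would fix a root $\alpha = \alpha_i$ and a point $y' \in Y'(k^{\mathrm{a}})$ above $y$, and analyze convergence to $\alpha$ in $K_y$. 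Since $f$ is squarefree, $\alpha$ is a simple root and $N_f$ is super-attracting at $\alpha$ with $N_f'(\alpha) = 0$. A Taylor expansion in $K_y$ yields the standard quadratic estimate
\[
v_{y'}(x_{n+1} - \alpha) \geq 2\, v_{y'}(x_n - \alpha) - C_0,
\]
where $C_0$ depends only on $f$, $x_0$ and the valuations at $y'$ of finitely many quantities involving $f'(\alpha), f''(\alpha), \ldots$ Iterating this recursion forces $v_{y'}(x_n - \alpha) \geq c_\alpha \cdot 2^n$ for all $n$ sufficiently large, once the orbit has entered the $y'$-adic immediate basin of attraction.

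The final ingredient is a height--divisor bound on $Y'$: for any nonzero $g \in K' = k(Y')$ and $y' \in Y'(k^{\mathrm{a}})$,
\[
v_{y'}(g) \cdot h_{Y'}(y') \leq h_{Y'}\bigl((g)_0\bigr) + O(1) \leq C_1 \cdot h_{K'}(g) + O(\deg g),
\]
obtained by interpreting $h_{Y'}$ as the height attached to an ample divisor and comparing with the degree of the rational map $g \colon Y' \to \PP^1$. Applied with $g = x_n - \alpha$, $v_{y'}(g) \geq c_\alpha 2^n$, and $h_{K'}(g) \leq C_2 d^n$, this produces $h_{Y'}(y') \leq C_3 (d/2)^n / c_\alpha$. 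For $d = 2$ the right-hand side is a uniform constant $M$; transferring this bound to $Y$ along $\pi$ yields the theorem.

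The principal difficulty is $d > 2$, where the naive bound above grows with $n$ rather than being uniform. The most promising refinements are: (a) to replace the crude divisor inequality by a local canonical height $\hat\lambda_{N_f, y'}$ satisfying $\hat\lambda_{N_f, y'}(N_f(x)) = d \cdot \hat\lambda_{N_f, y'}(x) + O(1)$, so that the left-hand growth rate can be promoted from $2^n$ to $d^n$; or (b) to use the factorization $f(x_n) = \prod_i (x_n - \alpha_i)$ and apply the bound simultaneously to all $d$ roots, whose combined zero divisors have total degree $d \cdot \deg(x_n) \sim d^{n+1}$, and extract a pigeonhole contradiction on the supports. Either refinement should produce the uniform $M = M(f, x_0, h_Y)$ asserted by the theorem.
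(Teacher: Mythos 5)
Your proposal takes a genuinely different route from the paper, but as written it has real gaps that prevent it from closing the argument.

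First, the assertion that ``since $(x_n)$ is not eventually periodic, $x_0$ is not preperiodic; hence $\hat h(x_0) > 0$'' is not automatic over a function field $K/k$. Non-preperiodic points of canonical height zero exist whenever the map is isotrivial (e.g., the $N_f$-orbit of a constant point under a constant Newton map). The positivity statement requires Baker's theorem \cite[Cor.~1.8]{Baker}, which applies only to non-isotrivial maps; the isotrivial case must be handled separately, and your sketch does not address it at all. The paper splits into non-isotrivial / isotrivial cases for exactly this reason, and uses Lemma~\ref{LEM: Isotrival Heights} together with a discreteness argument to dispose of the isotrivial possibilities.

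Second, and more fundamentally, you yourself identify that the growth-rate comparison ($d^n$ from global canonical height vs.\ $2^n$ from quadratic convergence) only yields a uniform bound on $h_Y(y)$ when $d = 2$. The two proposed refinements for $d > 2$ do not obviously repair this. For refinement (a): the quantity $v_{y'}(x_n - \alpha)$ is a local height relative to the divisor $[\alpha]$, and since the local ramification index of $N_f$ at a non-exceptional root $\alpha$ is exactly $2$, the relevant pullback relation is $N_f^*[\alpha] = 2[\alpha] + (\text{effective})$, not $d[\alpha]$; so a local canonical height for $[\alpha]$ cannot be forced to transform with factor $d$. For refinement (b): in the sum $v_{y'}(f(x_n)) = \sum_i v_{y'}(x_n - \alpha_i)$, only the term for the limit root $\alpha$ grows, the others stay $O(1)$, so the total still grows like $2^n$, not $d^n$. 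Neither idea promotes the $2^n$ rate to $d^n$.

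The paper's actual proof avoids the local quadratic-convergence analysis entirely. Instead it uses the Call--Silverman specialization theorem \cite[Thm.~4.1]{Call-Silverman}: once $\hat h_{K, N_f}(x_0) > 0$ (via Baker), the specialized canonical height $\hat h_{k, N_{f,y}}(x_{0,y})$ is asymptotic to $\hat h_{K, N_f}(x_0) \cdot h_Y(y)/\deg(Y)$, hence positive for $h_Y(y)$ large. This means $x_{0,y}$ is not preperiodic for $N_{f,y}$, so its orbit can never land on (let alone converge to) a fixed point. This sidesteps all growth-rate bookkeeping and works uniformly in $d$. If you want to salvage your approach, you would essentially need to reconstruct the specialization theorem from the divisor inequality, which is a much larger undertaking than the one remaining step in your sketch.
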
	


More generally, one can place a fundamental cardinality restriction on the set of places at which Newton approximation may detect a root of a polynomial. 

\begin{prop}
\label{Prop: Countably many}
	Let $K$ be a global height field, and let  $(x_n)$ be the Newton approximation sequence associated to a squarefree polynomial $f \in K[x]$ of degree at least~2 and a point $x_0 \in K$. The sequence $(x_n)$ converges $v$-adically to a root of $f$ for at most a countable number of places $v$ of $K$.
\end{prop}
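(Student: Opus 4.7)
The plan is to reduce convergence at a place $v$ to membership in a countable union of finite sets determined by the Newton step sizes. First, since $(x_n)$ is by definition a Newton approximation sequence, it is not eventually periodic; in particular no iterate $x_n$ is a root of $f$, since otherwise $N_f$ would fix $x_n$ and the orbit would be eventually constant. Combined with the fact that each $x_{n+1} = N_f(x_n)$ is an element of $K$ (so $f'(x_n) \neq 0$), this gives
\[
y_n \;:=\; x_n - x_{n+1} \;=\; \frac{f(x_n)}{f'(x_n)} \;\in\; K^{\times}
\qquad \text{for every } n \geq 0.
\]

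Next, suppose $(x_n)$ converges $v$-adically to a root of $f$. Because the limit is a finite element of $K_v$, the sequence is $v$-adically Cauchy, so $|y_n|_v \to 0$; in particular $|y_n|_v < 1$ for all $n$ sufficiently large. Thus $v$ lies in the ``support'' $S_n := \{w \in M_K : |y_n|_w \neq 1\}$ for every sufficiently large $n$.

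The final step is the adelic axiom built into the definition of a global height field: for any $y \in K^{\times}$, the set of places $w$ with $|y|_w \neq 1$ is finite. Applied to $y_n$, this makes each $S_n$ a finite subset of $M_K$. Hence any place at which $(x_n)$ converges to a root of $f$ must lie in the countable set $\bigcup_{n \geq 0} S_n$, proving the proposition. I do not anticipate a real obstacle: the argument uses only the adelic axiom (no product formula and no dynamical input) together with the observation that non-periodicity of the Newton approximation sequence forces each step size $y_n$ to be a nonzero element of $K$.
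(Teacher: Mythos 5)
Your proof is correct and uses the same essential mechanism as the paper --- showing that the set of convergent places sits inside a countable union of finite sets, with finiteness coming from the adelic axiom --- but the decomposition is different. The paper indexes by pairs $(n,\alpha)$, with $\alpha$ a root of $f$, and uses finiteness of $\{v : |x_n - \alpha|_v < 1\}$; this requires first passing to a finite extension of $K$ over which $f$ splits, and it tacitly uses $x_n \neq \alpha$ (a consequence of non-periodicity) so that $x_n - \alpha$ is a nonzero element to which the adelic axiom applies. Your version instead tracks the consecutive differences $y_n = x_n - x_{n+1} \in K^\times$, which keeps the entire argument inside $K$ itself and makes the nonvanishing hypothesis explicit rather than implicit; the trade is that you invoke Cauchyness of a convergent sequence instead of the direct definition of convergence to $\alpha$. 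One small imprecision: the root $\alpha$ need not lie in $K_v$ (only in a finite extension), so ``the limit is a finite element of $K_v$'' is not quite right as stated, but the conclusion that $(x_n)$ is $v$-adically Cauchy still holds since convergence in any extension field of $K_v$ forces Cauchyness of a sequence in $K_v$, so the argument is unaffected.
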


	The proof is trivial. Pass to a finite extension of $K$ if necessary so that $f$ splits completely. For each of the countably many pairs $(n, \alpha) \in \mathbb{N} \times \mathrm{Zeros}(f)$, the inequality $|x_n - \alpha|_v < 1$ may only be satisfied for finitely many places $v$ of $K$. 
	
	Morally, the above proposition holds because there is a finitely generated subfield of $K$ over which $f$ and $x_0$ are defined, and a finitely generated field admits only countably many places. Note that, unlike $\mathbb{Q}$ or $\FF_p(T)$, the places of an arbitrary finitely generated field do not come with any sort of canonical ordering with which to measure density. With these observations in mind, we ask the following:

\begin{question}
	Is there a meaningful way to assert that Newton approximation fails
	for a ``large'' set of places of an arbitrary finitely generated global height field?
\end{question}
		
	To close the introduction, we describe the contents of this paper. Section~\ref{Sec: Geometry} describes a number of geometric properties of the correspondence $f \mapsto N_f$ between polynomials and Newton maps. This includes the following dynamical characterization: Up to conjugation, Newton maps are precisely the rational functions of degree~$d \geq 2$ that have at least $d$ critical fixed points. The results in Section~\ref{Sec: Geometry} are well known in characteristic zero, so our goal is to compare and contrast the behavior  of this correspondence in characteristic zero with that in positive characteristic. We prove Theorem~\ref{Thm: Arithmetic/Dynamical Exceptional} in Section~\ref{Sec: Positive result}, and the proof of Theorems~\ref{Thm: Positive divergence} and~~\ref{Thm: Large fields} will occupy Section~\ref{Sec: Negative result}.

\medskip

\noindent \textbf{Acknowledgments.} The authors would like to thank Tom Tucker for recommending that they embark on this collaboration, and for  suggesting the use of Call/Silverman specialization in the proof of Theorem~\ref{Thm: Large fields}. They also thank the anonymous referee for the observation that led to our formulation of Proposition~\ref{Prop: Countably many}, and  Asher Auel for useful comments on an earlier draft of the manuscript.


\section{Geometry of the Newton map}
\label{Sec: Geometry}
	
	Throughout this section, we fix a separably closed field $L$ of characteristic $p \geq 0$. We begin by describing separability properties of the Newton map $N_f$ in terms of the polynomial $f$. Then we discuss the fixed points of the Newton map and use them to give a dynamical characterization of rational functions that are conjugate to a Newton map. In the final subsection we characterize those squarefree polynomials $f$ for which every root is a totally ramified fixed point of the Newton map. 
	

\subsection{Inseparability}


\begin{defn}\label{Separable}
 A rational function $\vp \in L(x)$ is \textbf{inseparable} if $L$ has positive characteristic and $\vp$ can be written as $\vp \left( x \right) = \psi \left( x^p \right)$ for some $\psi \in L(x)$; $\varphi$ is said to be \textbf{separable} otherwise. By the Hurwitz formula, $\vp$ is inseparable if and only if its formal derivative is identically zero. We say that $\varphi$ is \textbf{purely inseparable} if $\vp \left(x \right) = \eta \left(x^{p^r} \right)$ with $r > 0$ and $\eta$ fractional linear. 
\end{defn}

	It will be important for our arithmetic questions to know which polynomials $f$ give rise to purely inseparable Newton maps~$N_f$.

\begin{rmk}
	A nonconstant rational function $\varphi \in L(x)$ is separable (resp. inseparable, purely inseparable) if and only if the field extension $L(x) / L(\varphi)$ is separable (resp. inseparable, purely inseparable) in the sense of classical field theory. 
\end{rmk}

\begin{prop}\label{Inseparable_Criterion}
Suppose that $L$ has positive characteristic, and let $f  \in L\left[x \right]$ be a squarefree polynomial of degree at least~2. 
 	\begin{enumerate}
		\item The Newton Map $N_f$ is inseparable if and only if 
			 $f \left( x \right) = x g \left( x^p \right) + h \left( x^p \right)$ 
			 for polynomials $g,h \in L \left[ x \right]$.
		
		\item The Newton map $N_f$ is purely inseparable if and only if 
			$f \left( x \right) = x g \left( x^{p^r} \right) + h \left( x^{p^r} \right)$ 
			 for polynomials $g,h \in L \left[ x \right]$ of degree at most~1 and a positive integer~$r$. 
	\end{enumerate}
\end{prop}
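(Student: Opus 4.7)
The plan for part~(i) is a short direct computation of the formal derivative. One finds
\[
N_f'(x) = \frac{f(x)\, f''(x)}{f'(x)^2}.
\]
Since $f$ is squarefree, $\gcd(f, f') = 1$ in $L[x]$, so $N_f' \equiv 0$ precisely when $f'' \equiv 0$. Writing $f = \sum_n c_n x^n$, the polynomial identity $f'' = 0$ in characteristic $p$ is equivalent to $c_n = 0$ for every $n \not\equiv 0, 1 \pmod{p}$, and grouping the surviving terms by residue class mod $p$ recovers exactly the desired form $f(x) = x g(x^p) + h(x^p)$.

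For the forward direction of part~(ii) I would bootstrap off~(i): pure inseparability implies inseparability, so one may write $f(x) = x G(x^p) + H(x^p)$ with $G, H \in L[x]$. A direct computation, using $f'(x) = G(x^p)$ and $xf'(x) - f(x) = -H(x^p)$, gives
\[
N_f(x) = -\frac{H(x^p)}{G(x^p)}.
\]
The squarefree hypothesis on $f$ then forces $\gcd(G(y), H(y)) = 1$ in $L[y]$: any common root $y_0$ of $G$ and $H$ would, via a $p$-th root $\alpha$ of $y_0$, produce a common zero of $f$ and $f'$ in an algebraic closure. Now write the purely inseparable $N_f$ as $\eta(x^{p^r})$ with $\eta$ fractional linear and $r \ge 1$; substituting $y = x^p$ yields the identity $-H(y)/G(y) = \eta(y^{p^{r-1}})$ in $L(y)$. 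The key step is to match numerators and denominators: since Frobenius preserves coprimality in positive characteristic, the right-hand side is already in lowest terms, so comparison with the coprime left-hand side shows that $G(y)$ and $H(y)$ are themselves polynomials in $y^{p^{r-1}}$, each of degree at most one in that variable. Substituting back into $f$ yields $f(x) = x\, \tilde g(x^{p^r}) + \tilde h(x^{p^r})$ with $\deg \tilde g,\, \deg \tilde h \le 1$.

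The converse of~(ii) runs the same computation in reverse. Given $f(x) = x g(x^{p^r}) + h(x^{p^r})$ with $\deg g,\, \deg h \le 1$, one has $f'(x) = g(x^{p^r})$ and hence $N_f(x) = -h(x^{p^r})/g(x^{p^r}) = \eta(x^{p^r})$ with $\eta(y) := -h(y)/g(y)$. The main obstacle here is verifying that $\eta$ is genuinely fractional linear rather than a constant (and in particular that the denominator is not identically zero). The plan is to rule out the degenerate case $h = c\, g$ for some $c \in L$ by invoking squarefreeness: in that case $f = (x+c)\, g(x^{p^r})$, and in positive characteristic a factor $g(x^{p^r})$ with $\deg g = 1$ is a $p^r$-th power in the algebraic closure, contributing a root of multiplicity at least $p^r \ge 2$ and contradicting $\gcd(f, f') = 1$; the alternative $\deg g \le 0$ forces $\deg f \le 1$, which is excluded by hypothesis. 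Once $\eta$ is known to be fractional linear, $N_f(x) = \eta(x^{p^r})$ is purely inseparable by definition.
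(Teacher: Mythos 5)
Your proof is correct, and part~(i) takes a genuinely different route from the paper. The paper observes that $N_f$ inseparable forces both the (coprime) numerator $xf'-f$ and denominator $f'$ to lie in $L[x^p]$, then ``integrates'' $f' \in L[x^p]$ to recover the form of $f$. You instead compute $N_f' = ff''/(f')^2$ directly and use the fact (recorded in Definition~\ref{Separable}) that inseparability is equivalent to vanishing formal derivative, reducing everything to $f'' = 0$. This is arguably cleaner, and it isolates exactly which coefficients of $f$ must vanish. One small inaccuracy: you credit $\gcd(f,f')=1$ for the step ``$N_f'\equiv 0$ iff $f''\equiv 0$,'' but that step only needs $f \neq 0$ (i.e., $L(x)$ is a field); squarefreeness is what guarantees $f' \neq 0$ so that $N_f$ is defined at all.

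For part~(ii) your argument is essentially the paper's, filled out in much more detail --- the paper dispatches both directions with ``an immediate calculation'' and a terse maximality claim, while you explicitly verify that $G,H$ are coprime, that the substitution keeps the right-hand side in lowest terms, and that the $2\times 2$ coefficient matrix is invertible. Two cosmetic remarks: the phrase ``Frobenius preserves coprimality'' is misleading --- what you are using is that the substitution $y \mapsto y^{m}$ preserves coprimality of polynomials (because a common root of $P(y^m), Q(y^m)$ yields a common root of $P,Q$), which holds in any characteristic and is not Frobenius. And in the converse direction your case split under ``$h = cg$'' does not literally cover the case $g = 0, h \neq 0$; that case should be noted separately (it forces $f' = g(x^{p^r}) = 0$, or equivalently makes $f = h(x^{p^r})$ a $p^r$-th power in $L^{\mathrm{a}}[x]$, either way contradicting squarefreeness). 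These are easy to patch and do not affect the substance of the argument.
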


\begin{proof}
 Since $f $ is squarefree, its derivative cannot vanish identically, and $N_f$ is well defined.
 
 If $f \left(x  \right) = x g \left( x^p \right) + h \left( x^p \right)$, then $N_f \left( x \right) = - \frac{h\left(x^p \right)}{g \left(x^p \right)}$, which is inseparable.

 Conversely, assume that $N_f$ is inseparable. By definition, $N_f \left(x \right) = \frac{x f^\prime \left( x \right) - f \left( x \right)}{f^\prime \left( x \right)}$,
 and the numerator and denominator have no factor in common since $f$ is squarefree. It follows that $f^\prime \left( x \right) = g \left( x^p \right)$ for some $g \in L \left[ x\right]$. 
 Integration yields that $f \left( x \right) = x g \left( x^p \right) + h \left( x^p \right)$ for some~$h$. This completes the proof of (i). 

	For (ii), an immediate calculation shows that $N_f$ is purely inseparable if $f$ has the given form. Conversely, if $N_f$ is purely inseparable, then part (i) shows that $f(x) = x g \left(x^{p^r}\right) + h\left(x^{p^s}\right)$ for some polynomials $g, h$ and some integers $r,s > 0$. We may assume that $r$ and $s$ are chosen to be maximal. Then 	
		$N_f(x) = - \frac{h\left(x^{p^s}\right)}{g\left(x^{p^r}\right)}$.
As $N_f$ is purely inseparable, we must have $r = s$ and $h / g$ is fractional linear.
\end{proof}

	
\subsection{Degree and fixed points}
Typically, a polynomial of degree~$d$ will give rise to a Newton map of degree~$d$. However, in positive characteristic, the Newton map will have degree~$d-1$ when $d \equiv 1 \pmod p$. This difference has only a slight effect on the basic properties of the correspondence $f \mapsto N_f$. 
	
\begin{prop}\label{Newton_Degree}
	Let $f \in L[x]$ be a squarefree polynomial of degree $d \geq 2$. 
	\begin{enumerate}
		\item The Newton map has degree
			\[
				\deg(N_f) = \begin{cases}
					d & \text{if $d \neq 1$ in $L$} \\
					d - 1 & \text{if $d = 1$ in $L$}.
					\end{cases}
			\]

		\item Let $\alpha_1, \ldots, \alpha_d$ be the roots of $f$. Then the fixed points of the Newton map are
			\[
				\mathrm{Fix}(N_f) = \begin{cases}
					\{\alpha_1, \ldots, \alpha_d, \infty\} 
						& \text{if $d \neq 1$ in $L$} \\
					\{\alpha_1, \ldots, \alpha_d\} 
						& \text{if $d = 1$ in $L$}.	
					\end{cases}					
			\]

		\item Each of the roots $\alpha_i$ of $f$ is a critical fixed point for $N_f$.
			If $d \neq 1$ in $L$, then $\infty$ has fixed point multiplier 
			$\frac{d}{d-1}$. In particular, if $d = 0$ or~$1$ in $L$, then all of the 
			fixed points of $N_f$ are critical. 
	\end{enumerate}
\end{prop}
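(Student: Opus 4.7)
The plan is to handle all three parts by direct calculation from the explicit expression $N_f(x) = (xf'(x) - f(x))/f'(x)$. Since $f$ is squarefree, $\gcd(f,f') = 1$, and hence $\gcd(xf'-f, f') = \gcd(-f, f') = 1$, so this expression is already in lowest terms. Consequently the degree of $N_f$ as a rational function equals $\max(\deg(xf'-f), \deg f')$, and the behavior at $\infty$ can be read off directly from the leading coefficients of numerator and denominator.

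For (i), write $f(x) = \sum_{k=0}^d a_k x^k$, so $xf'(x) - f(x) = \sum_{k=0}^d (k-1) a_k x^k$ has leading coefficient $(d-1)a_d$ in degree $d$. This vanishes exactly when $d = 1$ in $L$. If $d \neq 1$ in $L$, the numerator has degree $d$ and $\deg N_f = d$. If $d = 1$ in $L$, the numerator drops below degree $d$ while $f'(x)$ has degree $d-1$ with leading coefficient $d a_d = a_d \neq 0$, giving $\deg N_f = d - 1$. For (ii), $N_f(x) = x$ is equivalent to $f(x)/f'(x) = 0$, and squarefreeness gives $f'(\alpha_i) \neq 0$, so the affine fixed points are exactly the $\alpha_i$. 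The point $\infty$ is fixed iff the numerator of $N_f$ has strictly larger degree than the denominator, which by part (i) happens iff $d \neq 1$ in $L$.

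For (iii), differentiation yields $N_f'(x) = f(x) f''(x) / f'(x)^2$, which vanishes at each root of $f$ (where $f = 0$ and $f' \neq 0$), so each $\alpha_i$ is a critical fixed point. For the multiplier at $\infty$ in the case $d \neq 0, 1$ in $L$, the asymptotic $f(x)/f'(x) \sim x/d$ gives $N_f(x) \sim \frac{d-1}{d} x$ at infinity; conjugating by $y = 1/x$ then produces a linear term with coefficient $d/(d-1)$ at $y = 0$, which is the multiplier. When $d = 0$ in $L$, the degree of $f'$ drops to at most $d-2$, so the local degree of $N_f$ at $\infty$ is at least $2$, forcing $\infty$ to be a critical fixed point with multiplier $0 = d/(d-1)$. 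The ``in particular'' clause is then immediate: if $d = 1$ in $L$ then $\infty$ is not a fixed point and only the critical $\alpha_i$ remain, while if $d = 0$ in $L$ both the $\alpha_i$ and $\infty$ are critical.

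The argument is essentially leading-coefficient bookkeeping, so I do not expect a substantive obstacle. The only thing requiring attention is the separate treatment of the positive-characteristic cases $d \equiv 0$ and $d \equiv 1 \pmod p$, which affect respectively the degree of $f'$ and the leading term of $xf' - f$; both exceptions are subsumed by the uniform condition ``$d$ equals or does not equal $1$ in $L$,'' which keeps the case analysis clean.
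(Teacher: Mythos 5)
Your proof is correct and follows the same core strategy as the paper: direct computation of numerator and denominator degrees for $N_f = (xf'-f)/f'$, keyed to whether $d=1$ in $L$. A few execution differences are worth noting. For (ii), the paper verifies that each root is fixed and that $\infty$ is fixed exactly when $d \neq 1$ in $L$, then closes the list by invoking the count that a degree-$\delta$ rational map has $\delta + 1$ fixed points; you instead characterize the full fixed locus directly from the equation $f/f' = 0$, which is arguably cleaner and avoids the counting argument. For (iii), the paper factors $f = (x-\alpha)g$ and exhibits $N_f(x) - \alpha = (x-\alpha)^2 \cdot (\text{regular part})$ to see ramification at each root, whereas you compute $N_f' = f f'' / (f')^2$ directly; the paper's single change-of-variable computation of the multiplier at $\infty$ also covers $d = 0$ in $L$ uniformly, where you treat that case by a separate ramification-index argument (both are fine). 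One subtlety your gcd observation handles only implicitly: if $xf' - f$ were identically zero, then $N_f$ would be the constant $0$, not a map of degree $d-1$. Your identity $\gcd(xf'-f, f') = \gcd(f,f') = 1$ does rule this out (since $\gcd(0,f') = f'$ would then force $f'$ to be a unit, contradicting $\deg f' = d-1 \geq 1$ when $d = 1 \neq 0$ in $L$), but the paper addresses this possibility explicitly via Proposition~\ref{Inseparable_Criterion}; it would be worth a sentence in your write-up.
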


\begin{proof}
The squarefree hypothesis shows there is no common factor shared between the numerator and denominator of $N_f(x) = \left(xf' - f \right) / f'$. 

For part (i), observe that the leading terms of the numerator and denominator of $N_f$ are $(d - 1)x^{d}$ and $dx^{d-1}$, respectively. We are finished unless $d = 1$ in $L$ and $N_f = 0$. But then we find that $f(x) = xg(x^p) + h(x^p)$ for some polynomials $g,h$ that are at most linear (Proposition~\ref{Inseparable_Criterion}). Computing $N_f$ shows that $h = 0$, and the squarefree hypothesis on $f$ forces $g$ to be constant. As we assumed $\deg(f) \geq 2$, this is a contradiction. 

For part (ii), it is clear that the $\alpha_i$ are fixed points of $N_f$. If $d \neq 1$ in $L$, then the previous paragraph shows that the numerator of $N_f$ has larger degree than the denominator, so infinity is a fixed point. A rational function of degree $\delta$ has $\delta+1$ fixed points, so we have found them all.  

For (iii), we pick a root $\alpha$ of $f$ and write $f(x) = (x - \alpha)g(x)$ for some polynomial $g$. Now
	\[
		N_f(x) - \alpha = (x- \alpha) - \frac{f(x)}{f'(x)} =  (x-\alpha)^2 \frac{g'(x)}{g(x) + (x-\alpha)g'(x)}.
	\]
Since $f$ is squarefree, we find $g(\alpha) \neq 0$, and $N_f(x)$ is ramified at $\alpha$. 

	If $d \neq 1$ in $L$, we can write 	
	\[
		N_f(x) = \frac{(d-1)x^d + \cdots}{dx^{d-1} + \cdots},
	\]
where we are ignoring lower order terms. We compute the fixed point multiplier at $\infty$ by a change of coordinate and ignoring higher order terms:
	\[		
		1 / N_f(1/x) = \frac{dx + \cdots}{(d-1) + \cdots}
		=\frac{d}{d-1} x + \cdots  \qedhere
	\]
\end{proof}


\begin{cor}
\label{Cor: No deg 1 Newton maps}
	If $d = 1$ in $L$, then there is no Newton map of degree~$d$. 
\end{cor}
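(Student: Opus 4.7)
The plan is to derive the corollary as an immediate case analysis from Proposition~\ref{Newton_Degree}(i), which fully catalogs $\deg(N_f)$ in terms of $\deg(f)$ and the residue of $\deg(f)$ in $L$. No extra machinery is needed: the claim amounts to observing that the correspondence $f \mapsto N_f$ skips the degree value $d$ whenever $d = 1$ in $L$.

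Concretely, I would suppose toward contradiction that $N_f$ is a Newton map with $\deg(N_f) = d$ and $d = 1$ in $L$, and set $d' = \deg(f)$. By Proposition~\ref{Newton_Degree}(i), either $\deg(N_f) = d'$, which occurs precisely when $d' \neq 1$ in $L$, or $\deg(N_f) = d' - 1$, which occurs precisely when $d' = 1$ in $L$. So the assumption $\deg(N_f) = d$ splits into two cases.

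In the first case, $d' = d$ and $d' \neq 1$ in $L$, which directly contradicts $d = 1$ in $L$. In the second case, $d' - 1 = d$ with $d' = 1$ in $L$, so that $d = d' - 1 = 0$ in $L$; combined with $d = 1$ in $L$, this yields $0 = 1$ in $L$, which is impossible. Either way we reach a contradiction, so no polynomial can produce a Newton map of degree $d$ under the stated hypothesis.

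There is essentially no obstacle here, and I would not expect the proof to occupy more than two or three lines in the final text. Conceptually, the corollary records that the spectrum of possible values of $\deg(N_f)$ in characteristic $p>0$ omits exactly those integers $d$ with $d \equiv 1 \pmod{p}$: the drop from $d'$ to $d' - 1$ at $d' \equiv 1$ means $d' - 1 \equiv 0 \pmod{p}$, so the value $d' \equiv 1 \pmod{p}$ is never attained as an output degree.
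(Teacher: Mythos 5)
Your argument is correct, and it is exactly the intended one: the paper states the corollary immediately after Proposition~\ref{Newton_Degree} without a separate proof, treating it as a direct consequence of part~(i), which is the same two-case contradiction you carry out.
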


	The following result gives a dynamical description of Newton maps. It is certainly well known in the case of fields of characteristic zero; see for example \cite{Nishizawa_Fujimura}. The only difference that arises in positive characteristic is that it is possible to have rational functions all of whose fixed points are critical. Such a function is necessarily conjugate to a Newton map. 

\begin{prop}
\label{Prop: Newton locus is dynamical}
	Fix an integer $d \geq 2$, and let $\phi \in L(x)$ be a rational function of degree~$d$. The following are equivalent:
\begin{enumerate}
	\item $\phi$ is conjugate to a Newton map $N_f$ for some squarefree 
		polynomial $f \in L[x]$;
	\item $\phi$ has at least $d$ critical fixed points; and
	\item If $d \neq 0$ in $L$ (resp. $d = 0$ in $L$), then $\phi$ has exactly $d$ (resp. exactly $d+1$) critical fixed points. 
\end{enumerate}
\end{prop}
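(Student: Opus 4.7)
The plan is to prove the chain of implications $(i) \Rightarrow (iii) \Rightarrow (ii) \Rightarrow (i)$, the last being the substantive direction.

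For $(i) \Rightarrow (iii)$: Conjugation preserves both critical and fixed points, so we may assume $\phi = N_f$. Proposition~\ref{Newton_Degree} lists the fixed points of $N_f$: the roots of $f$ (all critical), together with $\infty$ when $\deg f \neq 1$ in $L$ (with multiplier $\deg f / (\deg f - 1)$ there). A brief case analysis on whether $\deg f = d$ (which is forced when $d \neq 1$ in $L$) or $\deg f = d + 1$ (which requires $d = 0$ in $L$), combined with the vanishing criterion for the multiplier $d/(d-1)$ at $\infty$ (which happens iff $d = 0$ in $L$), shows in every case that $\phi$ has exactly $d$ critical fixed points when $d \neq 0$ in $L$, and exactly $d + 1$ when $d = 0$ in $L$. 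The implication $(iii) \Rightarrow (ii)$ is immediate.

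For the main direction $(ii) \Rightarrow (i)$: A degree-$d$ rational map has $d+1$ fixed points counted with multiplicity, and a critical fixed point (multiplier $0 \neq 1$) contributes exactly one to the count. Hence $\phi$ has between $d$ and $d+1$ critical fixed points. A suitable Möbius conjugation then arranges for $\phi$ to fix $\infty$ and to have $d$ distinct finite critical fixed points $\alpha_1, \ldots, \alpha_d$: if $\phi$ has $d$ critical fixed points we send the one remaining fixed point to $\infty$, and if it has $d+1$ we send any one of them there. Now write $\phi = P/Q$ in lowest terms; the condition $\phi(\infty) = \infty$ forces $\deg P = d$ and $\deg Q \leq d - 1$. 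The $\alpha_i$ are simple roots of $P - xQ$, a polynomial of degree at most $d$, so $P - xQ = cR$ with $R(x) := \prod_i (x - \alpha_i)$ and $c \in L^\times$. The criticality conditions $\phi'(\alpha_i) = 0$, rewritten using $P = xQ + cR$, reduce to $Q(\alpha_i) + cR'(\alpha_i) = 0$; since $Q + cR'$ has degree at most $d - 1$ and has $d$ roots, it vanishes identically, yielding $Q = -cR'$ and $\phi = (xR' - R)/R' = N_R$. Squarefreeness of $R$ is then automatic, since a repeated root would be a common zero of $P$ and $Q$, contradicting the lowest-terms assumption.

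The subtlety to watch is the degree drop $\deg N_f = \deg f - 1$ that occurs precisely when $\deg f = 1$ in $L$ (Proposition~\ref{Newton_Degree}(i)). When $d = 1$ in $L$, the construction above produces $P = -c(xR' - R)$ whose leading term has coefficient $-c(d - 1) = 0$, contradicting the hypothesis $\deg \phi = d$; this forces $(ii)$ to fail in that case, in harmony with Corollary~\ref{Cor: No deg 1 Newton maps} asserting that $(i)$ fails too. Keeping this dichotomy aligned with the separate vanishing dichotomy for the multiplier at $\infty$ ($d = 0$ vs.\ $d \neq 0$ in $L$) is what pins down the exact counts stated in $(iii)$.
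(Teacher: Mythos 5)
Your proof is correct and takes a somewhat different route from the paper's. For the central implication (ii) $\Rightarrow$ (i), the paper writes $\phi = A/B$ in lowest terms, sets $f = xB - A$, and notes that since the finite fixed points (roots of $xB-A$) are also critical points (roots of $BA' - AB' = B(A'-xB') + B'(xB-A)$), coprimality of $A$ and $B$ gives $xB - A \mid A' - xB'$; a degree comparison then forces $A' = xB'$ and hence $f' = B$. You instead compute $\phi'(\alpha_i) = 0$ directly, reduce it to $Q(\alpha_i) + cR'(\alpha_i) = 0$, and use the degree bound on $Q + cR'$ to conclude $Q = -cR'$. These are two ways of extracting the same identity (your $Q = -cR'$ is precisely the paper's $f' = B$, given $f = xB - A = -cR$); your version is a bit more computational, the paper's a bit more structural. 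For the equivalence with (iii), the paper proves (ii) $\Rightarrow$ (iii) via the fixed point index formula $\sum 1/(1-\lambda_i) = 1$, whereas you prove (i) $\Rightarrow$ (iii) directly from the explicit description in Proposition~\ref{Newton_Degree}; both close the cycle. Two small remarks: first, your justification of squarefreeness of $R$ is superfluous since $R = \prod_i (x - \alpha_i)$ is squarefree by construction (the $\alpha_i$ are distinct); what is worth making explicit in positive characteristic is that $R' \not\equiv 0$, which holds because $R'(\alpha_i) = \prod_{j \neq i}(\alpha_i - \alpha_j) \neq 0$. Second, your observation that the derived formula for $P$ degenerates when $d = 1$ in $L$ is a nice consistency check against Corollary~\ref{Cor: No deg 1 Newton maps}.
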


\begin{proof}
		(i) $\Rightarrow$ (ii). Evident from Proposition~\ref{Newton_Degree}.
	
	(ii) $\Rightarrow$ (i). Suppose that $\varphi$ has at least $d$ critical fixed points, and write $\varphi = A / B$ with $A$ and $B$ coprime polynomials. If one of the fixed points of $\varphi$ is not critical, we may conjugate it to $\infty$ without loss of generality. Otherwise, let us conjugate any of the critical fixed points to $\infty$. In particular, we may assume that $\deg(B) < \deg(A)$. 
	
	Set $f(x) = xB(x) - A(x)$; the roots of $f$ are exactly the finite fixed points of $\varphi$. We claim that $N_f = \varphi$. First, we show that $A' = xB'$. The finite critical points of $\varphi$ are the roots of 
	\[
		BA' - AB' = B(A' - xB') + B'(xB - A ). 
	\]
Since every finite fixed point is also a critical point, $xB - A \mid B(A' - xB')$. As $A$ and $B$ are coprime, we conclude that $xB - A \mid A' - xB'$. But now $\deg(xB - A) = d$, while $\deg(A' - xB') \leq d - 1$. Hence, $A' = xB'$, as desired. This relation implies that $f' = xB' - A' + B = B$, and 
	\[
		\gcd(f, f') = \gcd(xB - A, B) = \gcd(A,B) = 1.
	\]
That is, $f$ is squarefree. A direct computation shows that $N_f = A / B$, as desired. 

	(iii) $\Rightarrow$ (ii). Evident. 
	
	(ii) $\Rightarrow$ (iii). Write $\lambda_0, \ldots, \lambda_d$ for the multipliers at the $d+1$ (distinct) fixed points of $\phi$, and let us suppose that $\lambda_1 = \cdots = \lambda_d = 0$. The fixed point index formula shows that
	\[
		1 = \sum \frac{1}{1 - \lambda_i} = d + \frac{1}{1 - \lambda_0}. 
	\]
We see that $d = 0$ in $L$ if and only if $\lambda_0 = 0$. 
\end{proof}


\subsection{Dynamically exceptional roots}

\begin{defn}
\label{Def: Dynamical Exceptional}
	A root $\alpha$ of $f$ will be called  \textbf{dynamically exceptional} if it is exceptional for the Newton map; i.e., its backward orbit  $\{\gamma \in \PP^1(L): N_f^n(\gamma) = \alpha \text{ some $n \geq 0$}\}$ is finite. Equivalently, since roots of $f$ are fixed by $N_f$, $\alpha$ is dynamically exceptional if and only if it is totally ramified for $N_f$. It follows from the Hurwitz formula that if $N_f$ is not purely inseparable, then it has at most two exceptional points. 
\end{defn}

	The following lemma will allow us to move the roots of $f$ and adjust it to be monic without affecting the dynamical properties of the Newton map. 

\begin{lem}[{\cite[Prop.~2.4]{FV}}]
\label{Lem: Dyn Equiv}
	Let $f, g \in L[x]$ be polynomials of degree~$d$ related by the formula $g(x) = Af(Bx + C)$, where $A,B,C \in L$ and $AB \neq 0$. Set $\sigma(x) = Bx + C$. Then $N_g = \sigma^{-1} \circ N_f \circ \sigma$. 
\end{lem}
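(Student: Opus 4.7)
The plan is a direct verification by explicit computation, using only the chain rule and the definition of the Newton map. The statement is an identity in $L(x)$, so there is no subtlety about fields or separability to worry about; the only nontriviality is bookkeeping the factors of $A$, $B$, $C$.

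First I would compute $g'$ using the chain rule: since $g(x) = A f(\sigma(x))$ with $\sigma(x) = Bx + C$, we get $g'(x) = AB\, f'(\sigma(x))$. This is where the hypothesis $AB \neq 0$ enters, ensuring $g' \not\equiv 0$ (and, combined with $f$ having degree $d$, that $\deg g = d$ so $N_g$ makes sense whenever $N_f$ does). Plugging into the definition gives
\[
N_g(x) \;=\; x \;-\; \frac{g(x)}{g'(x)} \;=\; x \;-\; \frac{A\, f(\sigma(x))}{AB\, f'(\sigma(x))} \;=\; x \;-\; \frac{1}{B}\cdot \frac{f(\sigma(x))}{f'(\sigma(x))}.
\]

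Next I would compute the right-hand side. Since $\sigma^{-1}(y) = (y-C)/B$, we have
\[
\sigma^{-1}\!\bigl(N_f(\sigma(x))\bigr) \;=\; \frac{1}{B}\!\left(\sigma(x) - \frac{f(\sigma(x))}{f'(\sigma(x))} - C\right) \;=\; x - \frac{1}{B}\cdot \frac{f(\sigma(x))}{f'(\sigma(x))},
\]
using $\sigma(x) - C = Bx$. Comparing the two displayed expressions yields $N_g = \sigma^{-1} \circ N_f \circ \sigma$, which is the claim.

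There is no real obstacle here; the only thing to be slightly careful about is that the computation takes place in $L(x)$ as a formal identity of rational functions, so that the conclusion is independent of any points being evaluated and works in arbitrary characteristic (the chain rule holds for formal derivatives). If desired, one can record the corollary that this conjugacy intertwines iteration, i.e.\ $N_g^{\,n} = \sigma^{-1} \circ N_f^{\,n} \circ \sigma$ for all $n \geq 0$, which is what justifies using the lemma to normalize $f$ to be monic with a prescribed root at the origin in later arguments.
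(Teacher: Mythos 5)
Your computation is correct, and it is the natural direct verification. Note that the paper does not actually reprove this lemma --- it imports it by citation from \cite{FV} (Prop.~2.4) --- so there is no in-paper argument to compare against; your chain-rule calculation, with the observation that $\sigma(x)-C = Bx$ collapses $\sigma^{-1}\circ N_f\circ\sigma$ to the same expression as $N_g$, is exactly what one would write for the omitted proof, and the remark that $AB\neq 0$ is what keeps $g'\not\equiv 0$ and $\deg g = d$ is the right thing to flag.
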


\begin{prop}\label{prop:Dynamically_Exceptional}
	Let $f \in L[x]$ be a squarefree polynomial of degree $d \geq 2$. 
	\begin{enumerate}
		\item If $d = 2$, then every root of $f$ is dynamically exceptional.

		\item If $d > 2$ and $d \neq 1$ in $L$, then every root of $f$ is dynamically exceptional 
			if and only if $L$ has positive characteristic and there exist $A, B, C \in L$ and
			an integer $r \geq 1$ such that $Af(Bx + C) = x^{p^r} - x$. 

		\item If $d > 2$ and $d = 1$ in $L$, then every root of $f$ is dynamically exceptional 
			if and only if $L$ has positive characteristic and there exist 
			$\lambda, A, B, C \in L$ and an integer $r \geq 1$ such that 
			$Af(Bx + C) = x^{p^r+1} - (\lambda+1)x^{p^r} + \lambda x$. 			
	\end{enumerate}
\end{prop}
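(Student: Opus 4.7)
The plan is to treat the case $d=2$ separately, and then for $d>2$ to apply Riemann--Hurwitz twice---once to rule out separability of $N_f$, and again, after stripping off a power of Frobenius, to force $N_f$ to be purely inseparable. Once that geometric conclusion is in hand, Proposition~\ref{Inseparable_Criterion}(ii) constrains the algebraic shape of $f$, and Lemma~\ref{Lem: Dyn Equiv} will allow us to conjugate into the stated normal form. Part (i) is immediate: since $d=2$ cannot equal $1$ in $L$, Proposition~\ref{Newton_Degree}(iii) shows both roots of $f$ are critical fixed points of the degree-$2$ map $N_f$, and any critical point of a degree-$2$ map is totally ramified. The ``if'' direction of (ii) and (iii) is a direct check: after conjugating $f$ into the target normal form via Lemma~\ref{Lem: Dyn Equiv}, one computes $N_f(x) = x^{p^r}$ in case (ii) and $N_f(x) = (\lambda+1)x^{p^r}/(x^{p^r}+\lambda)$ in case (iii); both are purely inseparable, so every point of $\PP^1$, and in particular every root of $f$, is totally ramified.

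For the ``only if'' direction of (ii) and (iii), set $\delta = \deg N_f$, so $\delta = d$ in (ii) and $\delta = d-1$ in (iii). By Proposition~\ref{Newton_Degree}(ii) the $d$ distinct roots $\alpha_1,\dots,\alpha_d$ of $f$ are fixed by $N_f$, and each is totally ramified by hypothesis. If $N_f$ were separable, the Riemann--Hurwitz bound on the ramification divisor would give
\[
    d(\delta-1) \;\le\; \sum_P(e_P-1) \;\le\; 2\delta-2,
\]
forcing $(d-2)(\delta-1) \le 0$; this contradicts $d>2$ and $\delta \ge 2$. Hence $L$ has positive characteristic and $N_f$ is inseparable. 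Write $N_f = \psi \circ F^r$ with $F$ the Frobenius $x \mapsto x^p$ and $r \ge 1$ chosen maximal so that $\psi$ is separable of degree $m = \delta/p^r$. Since $e_{F^r}(\alpha) = p^r$ at every point, total ramification of $N_f$ at $\alpha_i$ translates to total ramification of $\psi$ at $\alpha_i^{p^r}$; and the $p^r$-power map on $L$ is injective, so the $\alpha_i^{p^r}$ are $d$ distinct totally ramified points for $\psi$. Riemann--Hurwitz applied to $\psi$ then gives $(d-2)(m-1) \le 0$, which forces $m = 1$. Thus $N_f$ is purely inseparable of degree $p^r$; in particular $d = p^r$ in case (ii) and $d = p^r+1$ in case (iii).

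Proposition~\ref{Inseparable_Criterion}(ii) now gives $f(x) = a_1 x^{p^r+1} + b_1 x^{p^r} + a_0 x + b_0$ for some $a_0,a_1,b_0,b_1 \in L$, with $a_1 = 0$ in case (ii) and $a_1 \ne 0$ in case (iii). I would then apply Lemma~\ref{Lem: Dyn Equiv}. In case (ii), squarefree-ness forces $a_0 \ne 0$ (as $f' = a_0$); I would choose $B \in L$ with $B^{p^r-1} = -a_0/b_1$---a root exists in $L$ because $\gcd(p^r-1,p)=1$ makes the polynomial $B^{p^r-1} + a_0/b_1$ separable---set $A = 1/(b_1 B^{p^r})$, and let $C$ be any root of $f$; a direct expansion gives $Af(Bx+C) = x^{p^r} - x$. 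In case (iii), I would first shift a root $C$ of $f$ to the origin, producing $\tilde f(y) = a_1 y^{p^r+1} + \tilde b_1 y^{p^r} + \tilde a_0 y$ with $\tilde b_1 = a_1 C + b_1$ and $\tilde a_0 = a_1 C^{p^r} + a_0$. Squarefree-ness forces both $\tilde b_1 \ne 0$ and $\tilde a_0 \ne 0$, since otherwise $\tilde f$ would acquire a factor of the form $(y - c)^{p^r}$ with $p^r \ge 2$. Setting $A = 1/(a_1 B^{p^r+1})$, matching coefficients of $A\tilde f(By)$ with the target polynomial reduces to the single equation
\[
    a_1 B^{p^r} + \tilde b_1 B^{p^r-1} + \tilde a_0 = 0.
\]
Its derivative in $B$ equals $-\tilde b_1 B^{p^r-2}$ (since $p^r \equiv 0$ and $p^r-1 \equiv -1$ modulo $p$), and $\tilde a_0 \ne 0$ prevents $B = 0$ from being a common root; hence the polynomial is separable, so has a nonzero root in $L$, and this $B$ together with $\lambda = \tilde a_0/(a_1 B^{p^r})$ gives the desired normalization.

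The main obstacle is this final normalization step, especially in case (iii): one has to verify that the polynomial equation defining the scaling parameter $B$ is separable, so that the separably closed hypothesis on $L$ really does supply a solution. Everything else is a routine combination of the two Riemann--Hurwitz inequalities with the classification of purely inseparable Newton maps from Proposition~\ref{Inseparable_Criterion}.
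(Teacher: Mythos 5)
Your proof is correct, and it takes a genuinely different route from the paper's. The paper handles characteristic zero by citing \cite[Cor.~1.2]{FV}, and in positive characteristic it conjugates two of the totally ramified roots to $0$ and $\infty$ to realize $N_f$ as (a conjugate of) $x^D$, then uses a third totally ramified fixed point to force $D$ to be a power of $p$; from there it computes $N_f$ in normal form and reads off $f$ from Proposition~\ref{Inseparable_Criterion}. You instead apply Riemann--Hurwitz twice: once to the separable $N_f$ to get a contradiction when $d > 2$, and once to the separable part $\psi$ of $N_f = \psi \circ F^r$ (using that $F^r$ is a bijection so that total ramification of $N_f$ at $\alpha_i$ descends to total ramification of $\psi$ at $\alpha_i^{p^r}$) to force $\deg\psi = 1$. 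This gives pure inseparability of $N_f$ in one uniform stroke, covering characteristic zero automatically rather than by citation, and avoids the conjugation to $x^D$ (along with the implicit $(D-1)$th-root extraction the paper glosses over). The price you pay is that your normalization works directly on $f$ rather than on $N_f$: you must exhibit $A$, $B$, $C$ explicitly and check that the defining polynomial for $B$ is separable so that the separably closed hypothesis actually supplies a solution. You carry this out carefully (the derivative computation $-\tilde b_1 B^{p^r-2}$ and the observation that $\tilde a_0 \neq 0$ excludes $B = 0$ from being a common root are exactly the right points to check), and the squarefree-ness arguments ruling out $\tilde a_0 = 0$, $\tilde b_1 = 0$, and $a_0 = 0$ are sound. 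Both approaches work; yours is more self-contained and uniform across characteristics, while the paper's is shorter because it leans on \cite{FV} and does the bookkeeping on the Newton map side where the structure is simpler.
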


\begin{proof}
	Suppose first that $f$ is quadratic, and that $\alpha, \beta$ are the roots of $f$. Then both $\alpha$ and $\beta$ are critical fixed points for $N_f$. A critical fixed point for a quadratic map is totally ramified, so that both $\alpha$ and $\beta$ are dynamically exceptional for $f$. 
	
	Now assume that $d = \deg(f) > 2$. 	It was shown in \cite[Cor.~1.2]{FV} that when $L$ has characteristic zero, a polynomial has at most one dynamically exceptional root. For the remainder of the proof, we may therefore assume that $L$ has characteristic $p > 0$. 
	
	Write $D$ for the degree of $N_f$. Suppose that $\alpha, \beta$ are distinct roots of $f$ that are totally ramified fixed points for $N_f$. 	By applying Lemma~\ref{Lem: Dyn Equiv}, we may assume without loss that $f$ is monic and that $\alpha = 0$ and $\beta = 1$. By conjugating $\beta$ to $\infty$ while fixing 0, we see that $N_f$ is conjugate to $\phi(x) = x^D$. At least one other fixed point of $N_f$ is totally ramified, so the same must be true of $\phi$. The fixed points of $\phi$ are $0, \infty$, and the $(D-1)^{\mathrm{st}}$ roots of unity. If $\zeta$ is such a root of unity, we find that
	\[
		\phi(x) - \zeta = x^D - \zeta = (x - \zeta)^D
	\]
if and only if $D = q$ is a power of the characteristic and $\zeta \in \FF_q$. For $a \in L \smallsetminus \{0\}$, write $\sigma(x) = ax / (x-1)$ for a fractional linear transformation that fixes $0$ and moves $1$ to $\infty$. There exists $a \in L$ such that
	\begin{equation}
	\label{Eq: Newton special form}
		N_f(x) = \sigma^{-1} \circ \phi \circ \sigma(x) = \frac{a^{q-1}x^q}{a^{q-1}x^q - (x-1)^q}
			= \frac{a^{q-1}x^q}{a^{q-1} x^q - x^q + 1}. 
	\end{equation}
	
	If $d \neq 1$ in $L$, then $N_f$ fixes infinity, which means $a^{q-1} = 1$. Thus $N_f(x) = x^q$. Proposition~\ref{Inseparable_Criterion} shows that $f(x) = xg(x^q) + h(x^q)$ for polynomials $g,h$ that are at most linear; computing $N_f$ directly shows that $g = -1$ and $h = x$. (Recall that we have assumed $f$ is monic.)
	
	If instead $d = 1$ in $L$, then $N_f$ does not fix infinity, so that $a^{q-1} \neq 1$. Replacing $a^{q-1}$ with $1 + 1 / \lambda$ in  \eqref{Eq: Newton special form} shows that 
	\[
		N_f(x) = \frac{(\lambda+1)x^q}{x^q + \lambda}.
	\]
As in the previous case,  $f(x) = xg(x^q) + h(x^q)$ for some linear polynomials $g, h$; computing $N_f$ directly shows that $g = x + \lambda$ and $h = - (\lambda + 1)x$.
	
	Conversely, if $f$ is of either of the two special forms in the statement, then $N_f$ is purely inseparable (Proposition~\ref{Inseparable_Criterion}), which implies that all of its fixed points are totally ramified. 
\end{proof}


\section{Success of Newton's method over global height fields}
\label{Sec: Positive result}

\subsection{Global height fields}
\label{Sec: Height fields}
	
	We follow the notation and conventions in \cite[\S1]{Bombieri}. 
	
\begin{defn}
	A field $K$ will be called a \textbf{global height field} if it is equipped with a collection $M_K$ of inequivalent nontrivial absolute values $|\cdot|_v$ such that the following two conditions hold for each $\alpha \in K \smallsetminus \{0\}$:
	\begin{itemize}
		\item (Adelic Property) The set $\{v \in M_K : |\alpha|_v \neq 1\}$ is finite, and
		\item (Product Formula) 	$\displaystyle \prod_{v \in M_K} |\alpha|_v = 1$.
	\end{itemize}
\end{defn}

Examples are given by number fields and function fields $k(X)$, where $X$ is a geometrically irreducible normal projective variety defined over a field~$k$.\footnote{Fix a nontrivial numerically effective divisor $H$ on $X$. Each prime divisor $Y$ on $X$ gives rise to an absolute value $| \cdot |_Y$ defined by $|f|_Y = \exp( -n_Y \mathrm{ord}_Y(f))$ for $f \in k(X)$, where $n_Y = Y \cdot H^{\dim(X) - 1}$.} See also \cite{Corvaja_Zannier_Infinite_Extensions} and \cite[\S2]{LangDiophantine}  for further interesting examples. Every finite extension $K' / K$ may be endowed with the structure of a global height field where each absolute value on $K'$ restricts to one in $M_K$, up to equivalence. We write $M_{K'}$ for the corresponding set of absolute values on $K'$ normalized as in \cite[\S1.3.6, 1.3.12]{Bombieri}. (The normalization is unimportant for the results presented here.)
	
	Let $K$ be a global height field with a fixed algebraic closure $K^{\mathrm{a}}$. The absolute logarithmic height function $h: \PP^1(K^{\mathrm{a}}) \to \mathbb{R}_{\geq 0}$ is defined as follows. Let $\alpha = (\alpha_0 : \alpha_1) \in \PP^1(K^{\mathrm{a}})$, and let $K' / K$ be a finite extension containing the coordinates of $\alpha$. Then we define
	\[
		h(\alpha) =  \sum_{w \in M_{K'}} \log \max\{|\alpha_0|_w, |\alpha_1|_w\}. 
	\]
By the product formula, the sum is independent of the choice of homogeneous coordinates for $\alpha$, and the normalizations of the elements of $M_{K'}$ are arranged so that $h(\alpha)$ is independent of the choice of extension $K'/K$ containing $\alpha$. 


\subsection{Arithmetically exceptional roots}

	Throughout this section, $K$ will denote a fixed global height field. 

\begin{defn}\label{Def: Arithmetic Exceptional}
Let $(x_n)$ be a Newton approximation sequence associated to some squarefree polynomial $f \in K[x]$ and some $x_0 \in K$. (Recall that we assume $(x_n)$ is not eventually periodic.) A root $\a \in K^{\mathrm{a}}$ of $f$ will be called \textbf{arithmetically exceptional} for $f$ if the Newton approximation sequence $\left( x_n \right)$ converges to $\a$ in  $K_v$ for only finitely many places $v \in M_K$. 
\end{defn}

	Definitions~\ref{Def: Dynamical Exceptional} and~\ref{Def: Arithmetic Exceptional} allow us to restate  Theorem~\ref{Thm: Arithmetic/Dynamical Exceptional} as an equivalence between arithmetic and dynamical exceptionality:

\begin{thm}
\label{THM: Success}
	Let $(x_n)$ be a Newton approximation sequence associated to some squarefree polynomial $f \in K[x]$ and a point $x_0 \in K$. Then a root of $f$ is arithmetically exceptional if and only if it is dynamically exceptional. 
\end{thm}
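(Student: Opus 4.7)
The plan is to separate the equivalence into its two implications. Throughout, we may enlarge $K$ to $K' = K(\alpha)$ and consider places $w \in M_{K'}$, since $(x_n) \to \alpha$ in $K_v$ requires $K_v \supseteq K(\alpha)$, and the finiteness/infiniteness of the convergence set transfers between the place sets $M_K$ and $M_{K'}$.

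For ``dynamically exceptional $\Rightarrow$ arithmetically exceptional'': if $\alpha$ is totally ramified under $N_f$ then $N_f^{-1}(\alpha) = \{\alpha\}$ as a set, so writing $N_f = A/B$ in lowest terms with $D = \deg N_f$ gives $A - \alpha B = c(x-\alpha)^D$ for some nonzero $c$, hence $N_f(x) - \alpha = c(x-\alpha)^D / B(x)$. Excluding a finite set $S \subset M_{K'}$ consisting of archimedean places, places of bad reduction for $f$, and places where $c$, $B(\alpha)^{-1}$, the coefficients of $f$, or $x_0$ have nontrivial valuation, one obtains the local identity $|N_f(x) - \alpha|_w = |x-\alpha|_w^D$ whenever $|x - \alpha|_w < 1$. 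Thus the open unit disk $U_w := \{y : |y-\alpha|_w < 1\}$ is forward-invariant under $N_f$, and by the single-valued preimage condition it satisfies $N_f^{-1}(U_w) = U_w$. So $U_w$ is the entire $w$-adic basin of attraction of $\alpha$: if $(x_n) \to \alpha$ at $w$, then $x_0 \in U_w$ by backward induction, and the adelic property forces only finitely many $w$ to satisfy $|x_0 - \alpha|_w < 1$.

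For ``not dynamically exceptional $\Rightarrow$ not arithmetically exceptional'': since $\alpha$ is not totally ramified, $\bigcup_{n \geq 0} N_f^{-n}(\alpha)$ is infinite, so $\alpha$ is classically non-exceptional for the dynamical system $N_f$; in particular $N_f$ is not purely inseparable (cf.\ Definition~\ref{Def: Dynamical Exceptional}). The substantive input is Towsley's Runge-type theorem from \cite{Towsley}: applied to the non-preperiodic orbit $(x_n)$ and the non-exceptional target $\alpha$, it produces infinitely many places $w \in M_{K'}$ at which some iterate $x_n$ satisfies $|x_n - \alpha|_w < 1$. By Proposition~\ref{Newton_Degree}(iii), $\alpha$ is a critical fixed point of $N_f$, hence super-attracting; so after discarding the finite set of bad-reduction places, any orbit entering $\{y : |y-\alpha|_w < 1\}$ converges to $\alpha$ in the completion at $w$. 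Restricting these $w$ to $M_K$ yields infinitely many places of $K$ at which $(x_n) \to \alpha$.

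The main obstacle is really packaged inside Towsley's theorem --- it is the Runge-method input that allows the conclusion at the level of arbitrary global height fields rather than just number fields. The remainder of the argument --- the local factorization $N_f(x) - \alpha = c(x-\alpha)^D / B(x)$ at good places, super-attraction of critical fixed points, and the base change to $K(\alpha)$ --- is routine once one has the geometric picture developed in Section~\ref{Sec: Geometry}.
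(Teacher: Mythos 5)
Your proposal is correct and takes essentially the same route as the paper: both directions hinge on Towsley's Runge-type result (Theorem~\ref{Thm: Towsley1}) applied with $\gamma=\alpha$ and $\beta=x_0$, combined with a local analysis at all but finitely many places. The paper's local analysis is phrased as an explicit inequality, $|x_{n+1}-\alpha|_v\le|x_n-\alpha|_v^2$ for $v$ outside a finite bad set, rather than in your language of invariant disks and super-attraction, but the content is the same. One step in your write-up deserves to be fleshed out: the inclusion $N_f^{-1}(U_w)\subseteq U_w$ does \emph{not} follow purely from the global fact that $N_f^{-1}(\alpha)=\{\alpha\}$ over $K^{\mathrm{a}}$ together with forward-invariance. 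One must rule out a point $x$ with $|x-\alpha|_w\ge 1$ and $|N_f(x)-\alpha|_w<1$, and this requires comparing $|f'(x)|_w$ against $|x-\alpha|_w^{\deg N_f}$ --- in particular, one uses that $\deg f'\le \deg N_f$ and that the roots of $f'$ are $w$-integral after discarding finitely many places. The paper does exactly this degree count when it splits into the cases $|f'(x_n)|_v\le 1$ and $|f'(x_n)|_v>1$ (see the argument surrounding \eqref{Eq: Not to infinity}); you should include that computation rather than attributing the backward inclusion to the single-valued preimage condition alone. Also, your remark that $\alpha$ non-exceptional forces $N_f$ not purely inseparable is true but unnecessary for this direction, since Theorem~\ref{Thm: Towsley1} imposes no separability hypothesis.
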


The proof is a consequence of the following result of the second author. Note that although it was originally stated for function fields over a finite constant field, its proof uses only the properties of a global height field. 

\begin{thm}[{\cite[Cor.~3.3]{Towsley}}]
\label{Thm: Towsley1}
	Let $\varphi \in K(x)$ be a rational function of degree at least~2. Let $\beta, \gamma \in \PP^1(K)$. Suppose that $\gamma$ is periodic and not exceptional for $\varphi$. If $\beta$ is not preperiodic, then there exist infinitely many places $v$ of $K$ such that $\left|\varphi^n(\beta) - \gamma\right|_v < 1$  for some $n\geq 0$. 
\end{thm}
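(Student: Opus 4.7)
The plan is a contrapositive argument: I assume the conclusion fails, use this to produce an infinite $S$-integral orbit in $K$, and then derive a contradiction via Runge's method as indicated by the reference to \cite{Towsley}.

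First, suppose for contradiction that the set of places $v$ for which some $\varphi^n(\beta)$ satisfies $|\varphi^n(\beta) - \gamma|_v < 1$ is finite. Enlarge this set to a finite $S \subset M_K$ containing the archimedean places and any place of bad reduction for $\varphi$, $\beta$, or $\gamma$. Then for every $v \notin S$ and every $n \geq 0$ one has $|\varphi^n(\beta) - \gamma|_v \geq 1$; in other words, the entire forward orbit of $\beta$ is $(\gamma,S)$-integral. Because $\beta$ is not preperiodic and $\deg(\varphi) \geq 2$, the forward orbit $\{\varphi^n(\beta) : n \geq 0\}$ is an infinite subset of $\PP^1(K)$, so we obtain infinitely many $(\gamma,S)$-integral points that I must rule out.

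Second, I reduce to the fixed-point case. Let $N$ be the period of $\gamma$ and put $\psi = \varphi^N$. Then $\psi(\gamma)=\gamma$ and $\deg(\psi) \geq 2$. Since $\psi$-preperiodicity of $\beta$ would imply $\varphi$-preperiodicity, $\beta$ remains non-preperiodic; and because exceptional sets are invariant under iteration, $\gamma$ remains non-exceptional for $\psi$. The inclusion $\{\psi^m(\beta)\} \subseteq \{\varphi^n(\beta)\}$ shows a conclusion for $\psi$ suffices. A $K$-rational Mobius conjugation then moves $\gamma$ to $\infty$. Non-exceptionality of $\infty$ as a fixed point of $\psi$ forces some iterate $\psi^k$ to have a pole at a finite point $\alpha \in K^{\mathrm{a}}$, so the fiber $(\psi^k)^{-1}(\infty)$ contains at least two distinct $K^{\mathrm{a}}$-points.

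Third, I invoke Runge's method. Writing $\psi^k = P/Q$ in lowest terms with $\deg(P) > \deg(Q) \geq 1$ and $Q(\alpha) = 0$, the orbit points $y_m := \psi^{mk}(\beta) \in K$ form an infinite $S$-integral sequence satisfying the polynomial relation $P(y_m) - y_{m+1} Q(y_m) = 0$. This cuts out an affine curve $C \subset \mathbb{A}^2$ over $K$, on which $(y_m, y_{m+1})$ provides infinitely many $S$-integral points. The existence of two distinct $K^{\mathrm{a}}$-points above $\infty$ in $(\psi^k)^{-1}(\infty)$ translates into the splitting of the ``divisor at infinity'' of $C$, which, after enlarging $S$ to handle Galois descent, yields the Runge condition for $C$ at every completion $K_v$ with $v \notin S$. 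Runge's theorem in the form used in \cite{Towsley} then bounds the number of $S$-integral points on $C$, contradicting the infinitude of the orbit.

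The hard part will be the third step: setting up the correct correspondence and verifying that the Runge splitting condition descends from $K^{\mathrm{a}}$ to the completions $K_v$ for all $v$ outside a fixed finite set. Non-exceptionality of $\gamma$ supplies the geometric input (multiple distinct preimages), but translating this into a local factorization of the top form of the defining polynomial, and combining it with the adelic integrality data coming from the negated hypothesis, is the technical core of the argument. By contrast, the first two steps are essentially formal normalizations.
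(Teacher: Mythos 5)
The first thing to note is that the paper does not prove this statement: it is imported wholesale from \cite[Cor.~3.3]{Towsley}, with only the remark that the cited proof rests on Runge's method and uses nothing beyond the axioms of a global height field. So your attempt can only be compared with the advertised strategy of the cited source, and at that level your outline (contrapositive, $S$-integrality of the forward orbit, reduction to a non-exceptional fixed point at $\infty$, Runge) is the right skeleton.

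However, the sketch has a genuine gap exactly where you place ``the technical core.'' Producing \emph{two} distinct points in $(\psi^k)^{-1}(\infty)$ is not enough to run Runge: your curve $C$ is rational, and $\PP^1$ minus two points has infinitely many $S$-integral points (e.g.\ $S$-units) once $|S|\geq 2$, which is unavoidable here since $S$ must absorb archimedean and bad-reduction places as well as every place where the orbit meets the residue disk of $\gamma$. What Runge actually requires is that the number of points of the divisor at infinity of $C$ --- which is precisely $(\psi^k)^{-1}(\infty)$ --- exceed $|S|$ (suitably counted), so that by pigeonhole over the places \emph{in} $S$ some component is avoided at every place; your formulation of ``the Runge condition at every $K_v$ with $v\notin S$'' has this backwards, since outside $S$ integrality already keeps the points away from infinity. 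This is where non-exceptionality enters quantitatively: because $\infty$ is a non-exceptional fixed point, the nested sets $(\psi^k)^{-1}(\infty)$ grow without bound, and $k$ must be chosen large relative to $|S|$. Finally, even with the correct Runge set-up, over a general global height field the output is \emph{bounded height}, not finiteness --- Northcott fails --- so ``contradicting the infinitude of the orbit'' needs a further argument comparing with canonical heights and disposing of height-zero wandering points. That this last step cannot be formal is shown by $K=\overline{\mathbb{Q}}(t)$, $\varphi(x)=x^2$, $\gamma=1$, $\beta=2$: every $\varphi^n(\beta)-\gamma$ is a nonzero constant, so $|\varphi^n(\beta)-\gamma|_v=1$ at every place, and the whole orbit is $S$-integral with bounded (zero) height.
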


\begin{proof}[Proof of Theorems~\ref{Thm: Arithmetic/Dynamical Exceptional} and~\ref{THM: Success}]
	By passing to a finite extension if necessary, we may assume that all of the roots of $f$ lie in $K$. Let $\alpha \in K$ be such a root. Write $f$ as $f \left(x \right) = \left( x- \a \right) g(x)$ for some polynomial $g \in K[x]$ that does not vanish at $\alpha$. 
 Define $S_\a$ to be the finite set of places $v \in M_K$ such that at least one of the following is true:
 	\begin{itemize}
		\item The leading coefficient of $f'$ has $v$-adic absolute value different from~1;
		\item $|\alpha - \beta|_v \neq 1$ for each root $\beta$ of $f'$; 
		\item $|\beta|_v \leq 1$ for each root $\beta$ of $f'$; and
		\item $|\alpha|_v \leq 1$. 
	\end{itemize}

Let $v \in M_K \smallsetminus S_\a$, and suppose that $n \geq 0$ is such that $|x_n - \alpha|_v < 1$. By definition, 
	\begin{align*}
		x_{n+1} - \alpha = N \left( x_n \right)  - \alpha
			&= x_n - \alpha - \frac{\left(x_n  - \a \right) g \left( x_n \right)}{\left( x_n - \a \right)g^\prime \left( x_n \right) + g \left( x_n \right)} \\
			 &=   \left( x_n - \a \right)^2  \cdot \frac{g^\prime \left( x_n \right)}
			 	{f'(x_n) } 
	 \end{align*}
 As $v \notin S_\a$, the denominator of this last expression has $v$-adic absolute value~1. Hence, $|x_{n+1} - \a|_v \leq |x_n - \a|_v^2$. By induction, $|x_{n+\ell} - \a|_v \leq |x_n - \a|_v^{2^\ell}$ for every $\ell \geq 0$. It follows that the Newton approximation sequence $\left( x_n \right)$ converges to $\a$ in the $v$-adic topology for $v \in M_K \smallsetminus S_\a$ if and only if $x_n$ lies in the strict open disk of radius~1 about $\alpha$ for some $n \geq 0$. 
 
	Now suppose that $\alpha$ is not dynamically exceptional for $f$. Theorem~\ref{Thm: Towsley1} may be applied in our setting with $\beta = x_0$ and $\gamma = \alpha$ to show there are infinitely many places $v$ for which  $|x_n - \a|_v < 1 $ for some $n$. So $\a$ is not arithmetically exceptional by the preceding paragraph. 

	Finally, suppose that $\a$ is dynamically exceptional. Then 
	\[
		N_f(x) - \alpha = c \frac{(x - \alpha)^{\deg(N_f)}}{f'(x)}
	\]
for some nonzero $c \in K$. Setting $x = x_n$, we see that 
	\begin{equation}
	\label{Eq: Not to infinity}
		x_{n+1} - \alpha = c \frac{(x_n - \alpha)^{\deg(N_f)}}{f'(x_n)}.
	\end{equation}
Let $S_\a'$ be $S_\a$ along with the finitely many other $v$ for which $|c|_v \neq 1$. Take $v \not\in S_\a'$, and suppose that $|x_{n+1} - \alpha|_v < 1$ for some $n \geq 0$. Then either $|f'(x_n)|_v > 1$ or $|x_n - \a|_v < 1$. If the former is true, then our hypothesis on $v$ shows that $|x_n - \beta|_v > 1$ for some root $\beta$ of $f'$, in which case $|x_n|_v > 1$.  But \eqref{Eq: Not to infinity} shows that $|x_{n+1} - \alpha|_v = |x_n|_v^{\deg(N_f) - \deg(f')} \geq 1$, a contradiction. So we are forced to conclude that $|x_n - \alpha|_v < 1$ whenever $|x_{n+1} - \alpha|_v < 1$. By induction, $|x_n - \alpha|_v < 1$ if and only if $|x_0 - \alpha|_v < 1$. As $(x_n)$ is not eventually periodic, there are only finitely many places $v$ for which this last inequality can hold. Hence, $\alpha$ is arithmetically exceptional. 
\end{proof}


\section{Failure of Newton's method}
\label{Sec: Negative result}

\subsection{Global fields}
	Throughout this subsection, we let $K$ be a global field --- that is, a finite extension of the field of rational numbers $\mathbb{Q}$ or of the function field $\FF_p(T)$ for some rational prime $p$. Let $K^{\mathrm{a}}$ be a fixed algebraic closure of $K$. We write $\oo_K$ for the integral closure of $\mathbb{Z}$ or $\FF_p[T]$ inside $K$. In the function field setting, this is equivalent to saying that $K$ is the function field of a smooth connected projective curve $C/\FF_p$ endowed with a finite morphism $C \to \PP^1_{\FF_p}$, and $\oo_K$ is the ring of rational functions on $C$ that are regular away from the points lying above $\infty \in \PP^1_{\FF_p}$. 

	Write $M_K^{0}$ for the set of finite places of $K$; each place $v \in M_K^{0}$ may be canonically identified with a unique prime ideal $\pp_v \in \oo_K$, and conversely. We will interchange $v$ and $\pp_v$ as needed without further comment.  The norm of $v \in M_K^{0}$, denoted $\N v$, is the order of the residue field associated to $\pp_v$: $\N v = \# \oo_K / \pp_v$. The (natural) lower density of a set of places $S \subset M_K^0$ is defined to be
	\[
		\underline{\delta}(S) = \liminf_{X \to \infty} \frac{\#\{v \in S : \N v \leq X\}}{\#\{v \in M_K^0 : \N v \leq X\}}.
	\] 

\begin{rmk}
	Only finitely many places of $K$ are ignored by focusing on $M_K^0$, so this omission has no effect on any of the results in this section. 
\end{rmk}

 	The proof of Theorem~\ref{Thm: Positive divergence} will follow the general strategy of \cite{FV}, but in place of the primitive prime divisor result from \cite{FG}, we use a result of Benedetto et al. for number fields  \cite{BGHKST} and the corresponding result of the second author for function fields \cite{Towsley}. For the statement, we recall that if $K$ is a function field of characteristic~$p$ and $\varphi \in K(x)$ is an inseparable rational function, then it may be written as $\varphi(x) = \psi(x^{p^r})$ for some $r \geq 1$ and some \textit{separable} rational function $\psi$. The degree of  $\psi$ is called the \textbf{separable degree} of $\varphi$. Equivalently, the separable degree of $\varphi$ is the number of $K^{\mathrm{a}}$-rational solutions to the equation $\varphi(x) = \beta$ for a generic choice of $\beta$. To say that the separable degree of $\varphi$ is greater than 1 is to say that $\varphi$ does not induce a bijection on $\PP^1(K^{\mathrm{a}})$. 
	
\begin{thm}[{\cite[Thm.~1.1]{BGHKST}, \cite[Thm.~4]{Towsley}}]
\label{Thm: Towsley2}
	Let $K$ be a global field, and let $\varphi \in K(x)$ be a rational function of degree at least~2. If $K$ is a function field, suppose further that $\varphi$ has separable degree at least~2. Let $\gamma \in \PP^1(K)$ be a point with infinite $\varphi$-orbit, and let $\mathcal{B} \subset \PP^1(K)$ be a finite set of preperiodic points for $\varphi$. Then the set of places $\mathcal{P} \subset M_K^0$ such that $\varphi^n(\gamma) \not\equiv \beta \pmod \pp$ for any $n \geq 0$, any $\beta \in \mathcal{B}$, and any $\pp \in \mathcal{P}$, has positive lower density.
\end{thm}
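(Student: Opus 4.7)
The overall strategy is to apply the Chebotarev density theorem to the tower of extensions of $K$ generated by iterated preimages of $\mathcal{B}$ under $\varphi$, extracting from it a uniform lower bound on the proportion of primes at which no collision $\varphi^n(\gamma) \equiv \beta \pmod \pp$ occurs.

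\textbf{Step 1 (Reduction and translation).} First, enlarge $\mathcal{B}$ to be forward-$\varphi$-invariant by adjoining the (finite) forward orbit of each element; the result is still a finite set of preperiodic points and now satisfies $\varphi(\mathcal{B}) \subseteq \mathcal{B}$. Writing $T_N := \varphi^{-N}(\mathcal{B})$ and $T := \bigcup_{N \geq 0} T_N$, forward-invariance yields $T_N \subseteq T_{N+1}$, while $\gamma \notin T$ because $\gamma$ has infinite orbit and $T$ consists of preperiodic points. For a prime $\pp$ of good reduction, the condition ``$\varphi^n(\gamma) \equiv \beta \pmod \pp$ for some $n \geq 0$ and some $\beta \in \mathcal{B}$'' is then equivalent to ``$\gamma$ has the same residue modulo $\pp$ as some $\alpha \in T$.'' Since the orbit of the reduction of $\gamma$ in the finite set $\PP^1(\oo_K/\pp)$ is eventually periodic of length at most $\N\pp + 1$, any collision that happens must happen by step $n \leq \N\pp$; hence $\pp \in \mathcal{P}^c$ iff $\gamma$ has the same residue modulo $\pp$ as some $\alpha \in T_{\N\pp}$.

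\textbf{Step 2 (Arboreal Galois setup and Chebotarev).} Let $L_N$ be the Galois closure of $K(T_N)/K$, set $G_N := \mathrm{Gal}(L_N/K)$, and form the arboreal Galois group $G_\infty := \varprojlim G_N$ acting on the infinite preimage tree $T$. For a prime $\pp$ unramified in $L_N$ and of good reduction for $\varphi$, standard reduction theory shows that the event ``$\gamma \equiv \alpha \pmod \pp$ for some $\alpha \in T_N$'' forces $\mathrm{Frob}_\pp \in G_N$ to have a fixed point in $T_N$. Chebotarev then bounds the density of primes $\pp$ of norm $\leq X$ for which this occurs by
\[
    p_N := \frac{|\{\sigma \in G_N : \sigma \text{ fixes some element of } T_N\}|}{|G_N|},
\]
provided $X$ is sufficiently large relative to the discriminant of $L_N$.

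\textbf{Step 3 (Main combinatorial estimate, conclusion, and main obstacle).} The crux is a uniform bound $p_N \leq c < 1$ valid for all $N$. Since $\deg_{\mathrm{sep}}(\varphi) \geq 2$, the tree $T$ has branching number at least~$2$ at every node, so $|T_N|$ grows exponentially; a recursive count on the tree then shows that a positive proportion of its automorphisms act without fixed points on level $N$. The separable-degree hypothesis is indispensable here: for a purely inseparable $\varphi$, each node of $T$ has a single child, the action degenerates, and this counting collapses; that case is handled separately via the Silverman--Voloch construction \cite{Silverman_Voloch}. With $p_N \leq c$ in hand, choose $N = N(X)$ growing slowly enough to be compatible with effective Chebotarev on $L_N$ while still satisfying the pigeonhole truncation from Step 1; this yields $\underline\delta(\mathcal{P}) \geq 1 - c > 0$. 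The main obstacle is precisely the combination of these two ingredients: the combinatorial bound on $p_N$ requires delicate control of the iterated monodromy representation (handled via tree-automorphism counting in characteristic zero, and via a more careful argument leveraging separable branching in the positive-characteristic case), while the discriminants of $L_N$ grow so rapidly that $N$ cannot be taken as large as pigeonhole naively demands, so one must carefully calibrate $N(X)$ and manage wild ramification and inseparability artifacts in the tower $\{L_N/K\}$.
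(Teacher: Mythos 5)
The paper does not prove this statement: Theorem~\ref{Thm: Towsley2} is imported as a black box from \cite{BGHKST} (number fields) and \cite{Towsley} (function fields), so there is no internal proof to compare against; I am measuring your sketch against the arguments in those references. Your Steps 1--2 do reproduce their skeleton correctly: close $\mathcal{B}$ under $\varphi$, use the fact that good reduction commutes with taking preimages to convert a collision $\varphi^n(\gamma)\equiv\beta\pmod{\pp}$ into ``$\gamma$ reduces onto a vertex of the preimage tree,'' and then into ``$\mathrm{Frob}_\pp$ fixes a vertex of $T_N$.'' One structural simplification you miss: if $\gamma\equiv\alpha$ modulo a prime above $\pp$ with $\alpha\in T_M$ and $M\ge N$, then $\varphi^{M-N}(\alpha)\in T_N$ also reduces to a point rational over $\oo_K/\pp$, so $\mathrm{Frob}_\pp$ already has a fixed point at the \emph{fixed} level $N$, up to an exceptional set of primes depending only on $N$. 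This removes the need to calibrate $N(X)$ against discriminant growth in effective Chebotarev --- the issue you single out as the main obstacle --- since for each fixed $N$ ordinary Chebotarev gives upper density at most $p_N$, and one then takes $\inf_N p_N$.

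The genuine gap is Step 3, which is where the theorem actually lives. First, the tree does \emph{not} have branching number at least $2$ at every node: a vertex that is a totally ramified value of the separable part of $\varphi$ has a single child, and if $\mathcal{B}$ contains an exceptional point its entire backward orbit is a single path, every $\sigma\in G_N$ fixes it, and $p_N=1$ for all $N$. (The theorem survives because for exceptional $\beta$ the colliding primes form a finite set --- the same mechanism as in the dynamically exceptional case of the proof of Theorem~\ref{THM: Success} --- but that case must be split off, not absorbed into a tree count.) Second, and more seriously, even on a subtree with branching everywhere at least $2$, the bound $p_N\le c<1$ does not follow from counting fixed-point-free automorphisms of the abstract tree: $G_N$ is only a subgroup of $\mathrm{Aut}(T_N)$, and a subgroup can consist entirely of elements with fixed points --- for instance if sufficiently many vertices of $T_N$ are $K$-rational, every $\sigma$ fixes one, and nothing in the hypotheses rules this out a priori. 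Establishing $\lim_N p_N<1$ is the hard core of \cite{BGHKST} and \cite{Towsley}; it exploits the monotone structure of the sets $\{\sigma : \sigma \text{ fixes a vertex at level } N\}$ in the inverse limit together with genuinely arithmetic input about the backward orbit (and, in the function field case, control of the inseparable subextensions in the tower), not a purely combinatorial recursion. As written, your Step 3 asserts the theorem's hardest claim rather than proving it.
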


\begin{proof}[Proof of Theorem~\ref{Thm: Positive divergence}]
	We wish to show that the set of places $v$ of the global field $K$ at which $(x_n)$ fails to converge $v$-adically to a root of $f$ has positive lower density. Let us suppose first that $N_f$ is not purely inseparable. 
Let $\mathcal{B}$ denote the set of fixed points of $N_f$. (See Proposition~\ref{Newton_Degree}.)
A continuity argument shows that if $\left( x_n \right)$ converges in $\PP^1(F_\p)$ for a prime $\p \in M_K^0$, then it converges to an element of $\mathcal{B}$. Applying Theorem~\ref{Thm: Towsley2} with $\gamma = x_0$ shows that the sequence $(x_n)$ fails to $v$-adically approach $\mathcal{B}$ for a set of places $v$ of $K$ with positive lower density. 
 
	Now we suppose that $N_f(x)$ is purely inseparable; hence, $K$ necessarily has positive characteristic~$p$. After conjugating if necessary, we may replace $N_f$ with $ x^q$ for $q$ some power of~$p$. It suffices to show that $(x_n)$ converges $v$-adically to an element of $\PP^1(\FF_q)$ for at most finitely many places $v$. Let $\alpha \in \FF_q$. Write $y_0 = x_0 - \alpha$. A suitable induction hypothesis shows that $x_n = \alpha + y_0^{q^n}$. Then for any place $v$, we see that $|x_n - \alpha|_v = |y_0|^{q^n}_v$, so that $(x_n)$ converges $v$-adically to $\alpha$ only for those places $v$ for which $|y_0|_v < 1$. The set of such places $v$ is finite. A similar argument applies in the case $\alpha = \infty$ if we set $y_0 = 1 / x_0$. 
\end{proof}


\subsection{Function fields over global height fields}
	
	Before beginning the proof of Theorem~\ref{Thm: Large fields}, we recall that if $k$ is a global height field with height function $h_k$, and if  $\phi \in k(x)$ is a rational function of degree at least~2, there is a canonical height function $\hat h_{k, \phi}: \PP^1(k^{\mathrm{a}}) \to \mathbb{R}_{\geq 0}$ characterized by the following two properties:
	\begin{itemize}
		\item $\hat h_{k, \phi}\left(\phi(\alpha)\right) = \deg(\phi) \cdot \hat h_{k, \phi}(\alpha)$
			for any $\alpha \in \PP^1(k^{\mathrm{a}})$.
		\item The difference $|\hat h_{k, \phi}(\alpha) - h_k(\alpha)|$ for $\alpha \in \PP^1(k^{\mathrm{a}})$ may be bounded by a constant that does not depend on $\alpha$.  
	\end{itemize}
	
	Recall also that if $K/k$ is a function field as in the statement of Theorem~\ref{Thm: Large fields}, we say that a rational function $\phi \in K(x)$ is isotrivial if there is $\sigma \in \PGL_2(K^{\mathrm{a}})$ such that $\sigma \circ \phi \circ \sigma^{-1} \in k^{\mathrm{a}}(x)$. That is, up to a coordinate change and a finite extension of $K$, the function $\varphi$ is defined over the constant field. We prove the following fact about canonical heights for isotrivial maps:

 	\begin{lem}
        \label{LEM: Isotrival Heights}
	  Let $K/k$ be a function field over a global height field $k$, let $\vp \in K \left( x \right)$ be an isotrival rational function, and let $x_0 \in K$. Suppose that the isotriviality of $\vp$ is witnessed by $\sigma \in PGL_2 \left( K^a \right)$; that is, $\sigma \circ \vp \circ \sigma^{-1} \in k \left( x \right)$. The canonical height $\hat{h}_{K,\vp}\left( x_0 \right)\rightarrow \infty$ as $n \rightarrow \infty$ unless $x_0$ is preperiodic or $\sigma(x_0) \in \PP^1(k^a)$. 
        \end{lem}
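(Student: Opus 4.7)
The plan is to use conjugation to reduce the claim to a positivity statement about the canonical height of a point under a map defined over the constant field $k$, and then invoke a Northcott-type result of Benedetto--Baker type.

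First, set $\psi = \sigma \circ \vp \circ \sigma^{-1} \in k(x)$. A standard estimate shows that conjugation by a fixed element $\sigma \in \PGL_2(K^{\mathrm{a}})$ distorts the naive height $h_K$ on $\PP^1(K^{\mathrm{a}})$ by an additive constant depending only on $\sigma$; dividing by $(\deg \vp)^n$ and passing to the Tate limit yields
\[
    \hat h_{K,\vp}(x_0) \;=\; \hat h_{K,\psi}\bigl(\sigma(x_0)\bigr).
\]
Since $\hat h_{K,\vp}\bigl(\vp^n(x_0)\bigr) = (\deg\vp)^n \hat h_{K,\vp}(x_0)$, the asserted divergence as $n \to \infty$ is equivalent to strict positivity of $\hat h_{K,\psi}(\sigma(x_0))$. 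Moreover, $\sigma$-conjugation identifies the preperiodic points of $\vp$ with those of $\psi$, so the problem reduces to showing that $\hat h_{K,\psi}(\beta) > 0$ whenever $\beta := \sigma(x_0) \notin \PP^1(k^{\mathrm{a}})$ and $\beta$ is not $\psi$-preperiodic.

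Second, invoke the key positivity result: for $\psi \in k(x)$ of degree at least~$2$ (defined over the constant field of $K/k$) and $\beta \in \PP^1(K^{\mathrm{a}})$, one has $\hat h_{K,\psi}(\beta) = 0$ if and only if $\beta$ is $\psi$-preperiodic or $\beta \in \PP^1(k^{\mathrm{a}})$. The vanishing direction is straightforward: $\psi$ preserves $\PP^1(k^{\mathrm{a}})$, so for $\beta \in \PP^1(k^{\mathrm{a}})$ every iterate $\psi^n(\beta)$ is a constant, and the geometric places of $K$ evaluate trivially on constants, while the arithmetic contributions from the places of $k$ are uniformly controlled along the orbit by the product formula on $k$. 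The converse is a function-field Northcott property, due to Benedetto in the equicharacteristic case and Baker in characteristic zero; in the generality of a global height field $k$ one may argue via the local decomposition of the canonical height at the geometric places of $K = k(Y)$, where at any place $y$ of good reduction for $\psi$ the local contribution to $\hat h_{K,\psi}(\beta)$ is strictly positive unless $\beta$ specializes at $y$ to a preperiodic point of the reduced map, a condition that forces $\beta \in \PP^1(k^{\mathrm{a}})$ outside finitely many exceptional $y$. Applying this positivity statement to $\beta = \sigma(x_0)$ yields the conclusion.

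The main obstacle is justifying the positivity half of the Northcott property in the stated generality. For function fields over a number field or $\FF_p(T)$ this is directly covered by the cited works of Baker and Benedetto; for an abstract global height field $k$ one may either descend to a finitely generated subfield $k_0 \subset k$ over which $\psi$ and $\beta$ are defined and invoke the classical theorem over $k_0(Y)$, or else imitate the classical local argument, observing that the arithmetic contributions from places of $k$ add a non-negative term to $\hat h_{K,\psi}$ that can only increase the height. Either route delivers the required positivity and completes the proof.
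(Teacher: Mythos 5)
Your reduction step is the same as the paper's: conjugate by $\sigma$, use conjugation invariance of canonical heights, and reduce the lemma to showing that $\hat h_{K,\psi}(\beta) > 0$ whenever $\psi \in k(x)$, $\beta := \sigma(x_0) \notin \PP^1(k^{\mathrm{a}})$, and $\beta$ is not preperiodic. Where you diverge is in how you justify this positivity, and it is there that the argument breaks.

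Two concrete issues. First, the local claim --- that at a place $y$ of good reduction the local contribution to $\hat h_{K,\psi}(\beta)$ is positive ``unless $\beta$ specializes at $y$ to a preperiodic point of the reduced map'' --- is false. Take $\psi(x) = x^2$ over $K = k(T)$ and $\beta = T$, and let $y$ be the place at infinity of $T$. Then $\beta$ specializes at $y$ to the fixed (hence preperiodic) point $\infty$, yet the local contribution at $y$ is $\lim_n 2^{-n} \log^+ |T^{2^n}|_y = \log|T|_y > 0$. The correct local dichotomy concerns whether the $\psi$-orbit of $\beta$ escapes $y$-adically (equivalently, for good reduction, whether $|\beta|_y > 1$), not whether the reduction of $\beta$ is preperiodic. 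Second, the mention of ``arithmetic contributions from places of $k$'' is a red herring: the height $h_K$ on $K = k(Y)$ as a global height field is built entirely from the geometric places of $K$ over $k$ (the closed points of $Y$), and these are trivial on $k$; there is no second layer of places contributing to $\hat h_{K,\psi}$. Your invocation of Baker and Benedetto is also imprecise --- the result the paper cites from Baker (Cor.~1.8) handles the \emph{non}-isotrivial case, so you cannot appeal to it here; the isotrivial case requires its own argument, which is exactly what this lemma supplies --- and the proposed descent to a finitely generated subfield does not help, since the classical theorems are stated for function fields over $\overline{\FF}_p$ or number fields, not arbitrary finitely generated constant fields.

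The paper avoids all of this by making the growth explicit at a single well-chosen place. Since $\xi_0 = \sigma(x_0) \notin k^{\mathrm{a}}$, there is a place $z$ of $K$ with $|\xi_0|_z > 1$. Writing $\psi = g/h$ with $g, h \in k[x]$ coprime of degree at most $d = \deg(\psi)$, the nonzero coefficients of $g$, $h$ lie in $k^\times$ and hence are $z$-adic units, so $\max\{|g(\xi_0)|_z, |h(\xi_0)|_z\} = |\xi_0|_z^d$; iterating (using that the resultant of the homogenizations is a nonzero element of $k$, hence a unit at $z$) gives $\max\{|g(\xi_{n-1})|_z, |h(\xi_{n-1})|_z\} = |\xi_0|_z^{d^n}$, whence $h_K(\xi_n) \geq d^n \log|\xi_0|_z \to \infty$. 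This is elementary, self-contained, and works over any global height field $k$, which is the whole point of the lemma. If you want to repair your argument, replace the faulty local criterion by this single-place growth estimate (or by the correct escape criterion), and drop the appeal to black-box Northcott theorems.
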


	\begin{proof}
	If $x_0$ is preperiodic, then $\hat{h}_{K,\vp} \left( x_0 \right) = 0$, so we assume that $x_0$ is not preperiodic.
	Let $\psi := \sigma \circ \vp \circ \sigma^{-1} \in k \left( x \right)$ and define $\xi_n = \sigma \left( x_n \right)$. Observe that $\xi_{n+1} = \psi(\xi_n)$ for every $n \geq 0$. If $\xi_0 \in k^{\mathrm{a}}$, then $h_K(\xi_n) = 0$ for all $n$. This implies $\hat h_{K, \psi}(\xi_n) = 0$ for all $n$. Since canonical heights are conjugation invariant, we conclude that $\hat h_{K, \vp}(x_n) = 0$ for all $n$.
	
	Finally, we assume that $\xi_0$ is not defined over (an extension of) the constant field. There necessarily exists a place $z$ of $K$ such that $|\xi_0|_z > 1$. Write $\psi = g/h$ for some coprime polynomials $g, h \in k[x]$. The coefficients of $g$ and $h$ all have $z$-adic absolute value~1; hence, $\max\{|g(\xi_0)|_z, |h(\xi_0)|_z\} = |\xi_0|_z^d$, where $d = \deg(\psi) = \max\{\deg(g), \deg(h)\}$. Since $\xi_n = g(\xi_{n-1}) / h(\xi_{n-1})$ for all $n \geq 1$, we can proceed by induction to show that 
	\[
		\max\{|g(\xi_{n-1})|_z, |h(\xi_{n-1})|_z\} = |\xi_0|_z^{d^{n}}. 
	\]
It follows that
	\[
		h_K(\xi_{n})  = \sum_{y \in M_K} \log \max\{|g(\xi_{n-1})|_y, |h(\xi_{n-1})|_y\}
			\geq  \log |\xi_0|_z^{d^{n}}. 
	\]
	Hence, $h_K(\xi_n) \to \infty$ as $n \to \infty$. Since the canonical height $\hat h_{K, \psi}$ differs from the usual height $h_K$ by a bounded amount, $\hat h_{K, \psi}(\xi_n) \to \infty$, and  conjugation invariance of canonical heights shows that $\hat{h}_{K, \vp}(x_n) \to \infty$ as well. 
\end{proof}

	Using this lemma, a theorem of Baker, and the specialization result of Call and Silverman, we now prove Theorem~\ref{Thm: Large fields}.

\begin{proof}[Proof of Theorem \ref{Thm: Large fields}]
        Suppose first that the Newton map $N_f$ is not isotrivial. Since $x_0$ is not preperiodic for $N_f$, we apply Baker's theorem \cite[Cor.~1.8]{Baker} to conclude that the canonical height $\hat h_{K,N_f}(x_0)$ is positive. Write $x_{n,y}$ for the specialization of $x_n$ at $y \in Y(k^{\mathrm{a}})$, and similarly write $N_{f,y}$ for the induced rational function with coefficients in $k^{\mathrm{a}}$. Write $\deg(Y)$ for the degree of the ample divisor associated to $h_Y$. Then Call/Silverman specialization \cite[Thm.~4.1]{Call-Silverman} shows that 
	\[
		\hat h_{k,N_{f,y}}(x_{0,y}) \sim \hat h_{K, N_f}(x_0) \cdot \frac{h_Y(y)}{\deg(Y)} \text{ \ as $h_Y(y) \to \infty$}.
	\]
 In particular, if $h_Y(y)$ is sufficiently large, then $\hat h_{k,N_{f,y}}(x_{0,y}) > 0$. It follows that $x_{0,y}$ is not preperiodic for the specialized map $N_{f,y}$ when $y$ has large height, and in particular this means that it cannot ever encounter a fixed point of the map $N_{f,y}$. This is sufficient to prove that $(x_n)$ does not converge $y$-adically to a root of $f$.

	Now suppose that $N_f$ is isotrivial, and choose $\sigma \in \PGL_2 \left( K^{\mathrm{a}} \right)$ such that $\psi := \sigma \circ \varphi \circ \sigma^{-1} \in k^{\mathrm{a}}(x)$. Set $\xi_n = \sigma \left(x_n \right)$. Note that $\xi_n = \psi(\xi_{n-1})$ for each $n \geq 1$. If $\xi_0 \notin k^a$ then we apply Lemma \ref{LEM: Isotrival Heights} to conclude that $\hat{h}_{K, \psi} \left( \xi_n \right)>0$ for all $n \geq n_0$. If $n_0 > 0$, then replace $x_0$ with $x_{n_0}$. The strategy in the previous paragraph applies again to show that $(x_n)$ does not converge $y$-adically to a root of $f$ as long as $h_Y(y)$ is sufficiently large. 

 	Finally, suppose that $N_f$ is isotrivial as in the last paragraph, and that $\xi_0 = \sigma(x_0) \in k^a$. Without loss of generality, we will assume it lies in $\PP^1(k)$. The entire sequence $(\xi_n)$ must also be defined over $k$. But $(\xi_n)$ is therefore discrete in $\PP^1(K_y)$ for every place $y$, and so it cannot converge unless it is eventually constant. This contradicts the hypothesis that $(x_n)$ is not eventually periodic. We conclude that Newton approximation fails to converge for all places $y$ in this case. 
	\end{proof}


\bibliographystyle{plain}
\bibliography{bib}
\end{document}